\documentclass{article}
\usepackage{amscd}
\usepackage{amsmath}
\usepackage{amssymb}
\usepackage{amsthm}
\usepackage{ascmac}
\usepackage{bm}
\usepackage{graphicx}
\usepackage{here}
\usepackage[pdftex]{hyperref}%
\usepackage{mathtools}\numberwithin{equation}{section}
\usepackage{minitoc}
\usepackage{multicol}
\usepackage{multirow}
\usepackage{setspace}
\usepackage[dvipdfmx]{color}
\theoremstyle{definition}
\newtheorem{df}{Definition}

\newtheorem{q}{Problem}
\newtheorem{eg}{Example}
\newtheorem{rmk}{Remark}
\theoremstyle{plain}
\newtheorem{thm}{Theorem}
\newtheorem{prop}{Proposition}
\newtheorem{lem}{Lemma}
\newtheorem{cor}{Corollary}
\newenvironment{pf}{\begin{proof}}{\end{proof}}
\usepackage{tikz}
\usetikzlibrary{calc,intersections,patterns,through}
\newcommand{\ang}{\tikz@ang}
\def\tikz@ang(#1)(#2)#3{%
\pgfmathanglebetweenpoints{%
\pgfpointanchor{#1}{center}}{%
\pgfpointanchor{#2}{center}}
\pgfmathsetmacro{#3}{\pgfmathresult}%
}

\title{Algebraic Characterizations of Angle Multisections over Rings}
\author{Takashi HIROTSU}
\date{\today}
\begin{document}
\maketitle
\begin{abstract}
Let $n,$ $m \geq 2$ be integers, and let $R$ be a subring of $\mathbb R$ with field of fractions $F.$ 
In this article, we generalize the rational angle bisection problem previously proposed by the author to the following problem: which linearly independent vectors $\bm{a},$ $\bm{b} \in R^n$ form an angle with a sequence of $m$-sector vectors lying in $R^n$? 
When $\bm{a}$ and $\bm{b}$ are nonorthogonal, we prove that this condition is equivalent to the existence of a root in $F$ of a certain $m$-th degree polynomial over $R.$ 
In particular, when $R = \mathbb Z,$ the condition holds if and only if the polynomial has a root among the divisors of its constant term. 
When $m = 2^e$ with an integer $e \geq 1,$ we also prove that the condition is equivalent to $\cos (\theta /2^{e-1}) \in F,$ where $\theta$ is the angle between $\bm{a}$ and $\bm{b}.$
\end{abstract}
\section{Introduction}\label{sec-intro}
Throughout this article, let $n,$ $m \geq 2$ be integers, and let $R$ be a subring of  $\mathbb R$ with field of fractions $F.$ 
For any two nonzero vectors $\bm{a},$ $\bm{b} \in \mathbb R^n,$ we denote by $\angle(\bm{a},\bm{b})$ the angle between $\bm{a}$ and $\bm{b},$ as well as its measure in $[0,\pi ]$ depending on the context.
\begin{df}
Let $\bm{a},$ $\bm{b} \in R^n\setminus\{\bm{0}\}.$
\begin{enumerate}
\item[(1)]
A nonzero vector $\bm{c} \in R^n$ is called an {\it $i$-th $m$-sector vector of $\angle (\bm{a},\bm{b})$ over $R$} (or simply, an {\it angle multisector vector over $R$}), if there exist a sequence $(\bm{c}_0,\bm{c}_1,\dots, \bm{c}_m)$ of nonzero coplanar vectors in $R^n$ and an integer $r$ such that $\bm{c}_0 = \bm{a},$ $\bm{c}_m = \bm{b},$ $\bm{c}_i = \bm{c}$ for some $i \in \{ 1,\dots,m-1\},$ and
\[\angle(\bm{a},\bm{c}_j) = \frac{j}{m}(\angle (\bm{a},\bm{b})+2r\pi )\]
for each $j \in \{ 0,1,\dots,m\}.$ 
In this case, we also say that the $m+1$ vectors $\bm{c}_0,$ $\bm{c}_1,$ $\dots,$ $\bm{c}_m$ form a sequence of {\it $m$-equisector vectors of $\angle (\bm{a},\bm{b})$ over $R$} (or simply, {\it angle equisector vectors over $R$}).
\item[(2)]
The angle between $\bm{a}$ and $\bm{b}$ is said to be {\it $m$-sectable over $R$} if there exists a sequence of $m$-equisector vectors of $\angle (\bm{a},\bm{b})$ over $R.$
\end{enumerate}
\end{df}
\begin{eg}
Let $\bm{c}_0 = (7,1),$ $\bm{c}_1 = (2,1),$ $\bm{c}_2 = (1,1),$ $\bm{c}_3 = (1,2),$ and $\bm{c}_4 = (1,7).$ 
Then $\angle (\bm{c}_0,\bm{c}_4)$ has a sequence $(\bm{c}_1,\bm{c}_2,\bm{c}_3)$ of quadrisector vectors. 
That is, in the $xy$-plane, the acute angle between the two lines 
\[ y = \frac{1}{7}x \quad\text{and}\quad y = 7x\] 
has a sequence of quadrisector lines 
\[ y = \frac{1}{2}x, \quad y = x, \quad\text{and}\quad y = 2x\] 
as in Figure \ref{fig-multi-2d}.
\begin{figure}[h]
\centering
\includegraphics{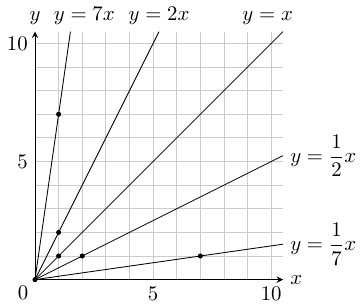}
\caption{A sequence of quadrisector lines of the acute angle between $y = (1/7)x$ and $y = 7x.$}\label{fig-multi-2d}
\end{figure}
\end{eg}
In this article, we generalize the rational angle bisection problem previously proposed by the author (see \cite[Problem 1]{Hir25}) to the following problem, which may be referred to as the {\it rational angle multisection problem}.
\begin{q}
Which linearly independent vectors $\bm{a},$ $\bm{b} \in R^n$ form an $m$-sectable angle over $R$?
\end{q}
\begin{rmk}
Let $\bm{a} = (1/\lambda )\bm{a}_0,$ $\bm{b} = (1/\lambda )\bm{b}_0 \in F^n$ with $\bm{a}_0,$ $\bm{b}_0 \in R^n$ and $\lambda \in R\setminus\{ 0\}.$ 
Then $\angle (\bm{a},\bm{b})$ is $m$-sectable over $F$ if and only if $\angle (\bm{a}_0,\bm{b}_0)$ is $m$-sectable over $R.$ 
\end{rmk}
The following theorem is known for the case when $m = 2.$ 
For any two vectors $\bm{a},$ $\bm{b} \in \mathbb R^n,$ we denote their inner product by $\langle\bm{a},\bm{b}\rangle.$ 
Let $F^{\times 2} = \{ q^2 \mid q \in F^\times\}.$
\begin{thm}[{\cite[Theorem 1]{Hir25}}]\label{thm-bisec}
Let $\bm{a},$ $\bm{b} \in R^n$ be linearly independent vectors. 
The following conditions are equivalent. 
\begin{enumerate}
\item[\textup{(B1)}]
A bisector vector $\bm{c}$ of $\angle(\bm{a},\bm{b})$ over $\mathbb R$ can be taken in $R^n.$
\item[\textup{(B2)}]
A normal vector $\bm{c}$ of a bisecting hyperplane between the two hyperplanes with normal vectors $\bm{a}$ and $\bm{b}$ can be taken in $R^n.$
\item[\textup{(B3)}]
There exists a vector $\bm{c} \in R^n$ such that 
\[\langle\bm{a},\bm{c}\rangle ^2|\bm{b}|^2 = \langle\bm{b},\bm{c}\rangle ^2|\bm{a}|^2.\]
\item[\textup{(B4)}]
We have 
\[ |\bm{a}|^2 \equiv |\bm{b}|^2 \pmod{F^{\times 2}}.\]
\end{enumerate}
Furthermore, if \textup{(B4)} holds, then $\bm{c}$ in \textup{(B1)}--\textup{(B3)} is parallel to 
\[\frac{|\bm{b}|}{\sqrt d}\bm{a}\pm\frac{|\bm{a}|}{\sqrt d}\bm{b},\] 
where $d$ is a positive number in $R$ such that $|\bm{a}|^2 \equiv |\bm{b}|^2 \equiv d \pmod{F^{\times 2}}.$
\end{thm}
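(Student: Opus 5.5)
The plan is to go around the cycle (B1) $\Rightarrow$ (B3) $\Rightarrow$ (B4) $\Rightarrow$ (B1). I handle (B2) by noting that the bisecting hyperplanes between the hyperplanes with normals $\bm{a}$ and $\bm{b}$ have as normals exactly the internal and external bisector directions of $\angle(\bm{a},\bm{b})$. Up to scaling, these directions are $\bm{a}/|\bm{a}| \pm \bm{b}/|\bm{b}|$. All four conditions then reduce to statements about these two directions.

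For (B1) $\Rightarrow$ (B3), I will use that a bisector $\bm{c}$ makes equal angles with $\bm{a}$ and $\bm{b}$. Then $\langle\bm{a},\bm{c}\rangle/|\bm{a}| = \langle\bm{b},\bm{c}\rangle/|\bm{b}|$, and squaring gives (B3). The same argument with the external bisector, where the sign changes, shows (B2) $\Rightarrow$ (B3).

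For (B3) $\Rightarrow$ (B4), I take $\bm{c} \in R^n$ with $\langle\bm{a},\bm{c}\rangle^2|\bm{b}|^2=\langle\bm{b},\bm{c}\rangle^2|\bm{a}|^2$.
\begin{itemize}
\item If $\langle\bm{a},\bm{c}\rangle \neq 0$, then $\langle\bm{b},\bm{c}\rangle \neq 0$, and $|\bm{a}|^2/|\bm{b}|^2 = (\langle\bm{a},\bm{c}\rangle/\langle\bm{b},\bm{c}\rangle)^2 \in F^{\times 2}$. This is (B4).
\item The main obstacle is the degenerate case $\langle\bm{a},\bm{c}\rangle=\langle\bm{b},\bm{c}\rangle=0$, for instance $\bm{c}=\bm{0}$ or $\bm{c}$ orthogonal to both. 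Here (B3) holds trivially, so the statement must implicitly require $\bm{c}$ to be nondegenerate.
\end{itemize}
I would read (B3) as asking for $\bm{c}$ in the span of $\bm{a},\bm{b}$ with $\bm{c}$ not orthogonal to both, or else choose $\bm{c}$ nonzero and coplanar with $\bm{a},\bm{b}$. I would state this convention explicitly. For a nonzero $\bm{c}$ in the plane, being orthogonal to both $\bm{a}$ and $\bm{b}$ is impossible by linear independence, so the nondegenerate case above applies.

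For (B4) $\Rightarrow$ (B1), (B2) and the final formula:
\begin{itemize}
\item Write $|\bm{a}|^2 = d\,p^2$ and $|\bm{b}|^2 = d\,q^2$ with $p,q \in F^\times$ and $d>0$. We may take $d\in R$ by clearing denominators, since $|\bm{a}|^2\in R$, and $p,q>0$.
\item Then $|\bm{a}|=p\sqrt d$ and $|\bm{b}|=q\sqrt d$. So $\frac{|\bm{b}|}{\sqrt d}\bm{a} \pm \frac{|\bm{a}|}{\sqrt d}\bm{b} = q\bm{a} \pm p\bm{b}$, a vector in $F^n$ that is parallel to $\bm{a}/|\bm{a}|\pm\bm{b}/|\bm{b}|$.
\item Multiplying by a common denominator in $R\setminus\{0\}$, as in the Remark, gives vectors in $R^n$. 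These are nonzero because $\bm{a},\bm{b}$ are linearly independent, and they give the internal bisector (B1) and the external one (B2).
\end{itemize}
Finally, any $\bm{c}$ satisfying (B1)--(B3), taken coplanar, satisfies $\langle\bm{a},\bm{c}\rangle/|\bm{a}|=\pm\langle\bm{b},\bm{c}\rangle/|\bm{b}|$. Such a $\bm{c}$ is therefore orthogonal to $\bm{a}/|\bm{a}|\mp\bm{b}/|\bm{b}|$ within the plane, so it is parallel to the corresponding bisector direction. This gives the stated form.
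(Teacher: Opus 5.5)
The paper contains no proof of this statement to compare with. Theorem \ref{thm-bisec} is imported from the author's earlier work and used as a black box. Your argument is correct.

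Your concern about (B3) is a real defect in the printed statement, not pedantry. As written, $\bm{c}=\bm{0}$ satisfies (B3), and so does any nonzero $\bm{c}\in R^n$ orthogonal to both $\bm{a}$ and $\bm{b}$ when $n\ge 3$. For example, $\bm{a}=(1,0)$, $\bm{b}=(1,1)$ over $\mathbb{Z}$ satisfy (B3) but not (B4). Likewise, the final ``parallel'' assertion can only hold for (B3) if $\bm{c}$ lies in $\mathbb{R}\bm{a}+\mathbb{R}\bm{b}$, because adding to $\bm{c}$ a vector orthogonal to the plane preserves (B3). With your convention (a nonzero $\bm{c}$ coplanar with $\bm{a},\bm{b}$, or at least not orthogonal to both), the argument closes. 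The cycle (B1)$\Rightarrow$(B3)$\Rightarrow$(B4)$\Rightarrow$(B1), together with (B2)$\Rightarrow$(B3) and (B4)$\Rightarrow$(B2), goes through exactly as you describe.

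The nearest thing in the paper is the example after the proof of Theorem \ref{thm-multi}, which recovers (B1)$\Leftrightarrow$(B4) as the case $m=2$ of that theorem.
\begin{enumerate}
\item[(1)] The polynomial $t^2-2pt-s^2$ has roots $p\pm|\bm{a}||\bm{b}|$, so it has a root in $F$ exactly when $|\bm{a}||\bm{b}|\in F$.
\item[(2)] The root $t=p+|\bm{a}||\bm{b}|$ gives the bisector $(t-p)\bm{a}+|\bm{a}|^2\bm{b}=|\bm{a}|\bigl(|\bm{b}|\bm{a}+|\bm{a}|\bm{b}\bigr)$. This is parallel to the ``furthermore'' vector with the $+$ sign.
\end{enumerate}
That route relies on the tangent criterion of Lemma \ref{lem-2}, the root--angle correspondence of Lemma \ref{lem-3}, and Corollary \ref{cor-multi}. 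It needs $\bm{a},\bm{b}$ nonorthogonal and says nothing about (B2) or (B3).

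Your route is more elementary. You compare $\langle\bm{a},\bm{c}\rangle/|\bm{a}|$ with $\pm\langle\bm{b},\bm{c}\rangle/|\bm{b}|$, and you exhibit $\frac{|\bm{b}|}{\sqrt d}\bm{a}\pm\frac{|\bm{a}|}{\sqrt d}\bm{b}$ directly as vectors in $F^n$. This handles all four conditions at once. It also covers orthogonal $\bm{a},\bm{b}$, which is the generality the paper needs when it invokes the theorem for the case $e=1$ of Theorem \ref{thm-pw2}.
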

In Section \ref{sec-multi}, we prove the following theorem.
\begin{thm}\label{thm-multi}
Let $\bm{a},$ $\bm{b} \in R^n$ be linearly independent and nonorthogonal vectors. 
Let 
\begin{align} 
p = \langle\bm{a},\bm{b}\rangle \quad\text{and}\quad s = \sqrt{|\bm{a}|^2|\bm{b}|^2-\langle\bm{a},\bm{b}\rangle ^2}. \label{eq-ps}  
\end{align} 
The following conditions are equivalent.
\begin{enumerate}
\item[\textup{(M1)}]
$\angle(\bm{a},\bm{b})$ is $m$-sectable over $R.$
\item[\textup{(M2)}]
We have 
\[\frac{1}{s}\tan\frac{\angle (\bm{a},\bm{b})}{m} \in F.\]
\item[\textup{(M3)}]
The $m$-th degree polynomial 
\begin{equation} 
\sum_{i = 0}^{\lfloor m/2\rfloor}(-s^2)^i\binom{m}{2i}t^{m-2i}-p\sum_{i = 0}^{\lfloor (m-1)/2\rfloor}(-s^2)^i\binom{m}{2i+1}t^{m-2i-1} \label{eq-deg-m}
\end{equation} 
in $t$ over $R$ has a root in $F.$
\end{enumerate}
Furthermore, if \eqref{eq-deg-m} has a root $t \in F,$ then a first $m$-sector vector of $\angle (\bm{a},\bm{b})$ over $R$ is given by 
\[\lambda ((t-p)\bm{a}+|\bm{a}|^2\bm{b})\] 
for some $\lambda \in R\setminus\{ 0\}.$
\end{thm}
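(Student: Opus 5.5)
We reduce everything to the plane spanned by $\bm{a}$ and $\bm{b}$ and use the complex-number model of rotations. Set $\theta=\angle(\bm{a},\bm{b})$, so $\cos\theta=p/(|\bm{a}||\bm{b}|)$ and $\sin\theta=s/(|\bm{a}||\bm{b}|)$. Every vector in the plane can be written $x\bm{a}+y\bm{b}$. The first $m$-sector direction is obtained by rotating $\bm{a}$ by $\phi=(\theta+2r\pi)/m$ toward $\bm{b}$. Write $\bm{u}=|\bm{a}|^2\bm{b}-p\bm{a}$; this is orthogonal to $\bm{a}$ and has length $|\bm{a}|s$. Then every vector at angle $\phi$ from $\bm{a}$ in this plane is parallel to $\cos\phi\,\bm{a}/|\bm{a}|+\sin\phi\,\bm{u}/(|\bm{a}|s)$. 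Equivalently, it is parallel to $(t-p)\bm{a}+|\bm{a}|^2\bm{b}$ with $t=s\cot\phi$, up to sign and positive scaling when $\sin\phi\neq0$. Hence a first sector vector lies in $R^n$ (after clearing denominators, and using that $\bm{a}$ and $\bm{b}$ are independent with $|\bm{a}|^2\in R$) if and only if $t=s\cot\phi\in F$, that is, $\frac1s\tan\phi\in F$. This links (M1) to (M2), once we handle the issue of $r$ and of the later sectors below.

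\textbf{(M2)$\Leftrightarrow$(M3).} Write $T=s\cot\phi$. De Moivre gives $(\cos\phi+i\sin\phi)^m=e^{i(\theta+2r\pi)}$, hence $(T+is)^m=\rho^m(\cos\theta+i\sin\theta)$ with $\rho=s/\sin\phi$. Taking the ratio of real to imaginary parts eliminates $\rho$: $\operatorname{Re}(T+is)^m\cdot s=p\cdot\operatorname{Im}(T+is)^m$. Expanding binomially, $\operatorname{Re}=\sum(-s^2)^i\binom{m}{2i}T^{m-2i}$ and $\operatorname{Im}=s\sum(-s^2)^i\binom{m}{2i+1}T^{m-2i-1}$. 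The factor $s$ cancels, which gives exactly that $T$ is a root of \eqref{eq-deg-m}. Note that only $s^2\in R$ enters, so the polynomial has coefficients in $R$.

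Conversely, every real root $t$ has the form $s\cot\phi$ for some $\phi\in(0,\pi)$, because $\cot$ is a bijection onto $\mathbb R$. For such $\phi$, the equation says $\arg (t+is)^m\equiv\theta \pmod\pi$, so $m\phi\equiv\theta\pmod \pi$. The main obstacle is excluding $m\phi\equiv\theta+\pi$. I would handle it with the nonorthogonality assumption $p\neq0$. Replacing $\theta$ by $\pi-\theta$ flips the sign of $p$, so the odd-$\pi$ roots are the roots of the polynomial with $-p$. I expect one must show that the two polynomials cannot share a root that would give a wrong sector, or else observe that a root with $m\phi\equiv\theta+\pi\pmod{2\pi}$ is still acceptable after allowing the direction $-\bm{c}$. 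This needs care. Concretely, I would check that the $m$ roots $s\cot((\theta+k\pi)/m)$, $k=0,\dots,m-1$, are exactly the roots of \eqref{eq-deg-m}. Here $\cot$ has period $\pi$, so the shift by $2r\pi/m$ covers all $k$, and the definition allows arbitrary integers $r$. I also need to verify that the odd $k$ values correspond to genuine multisections $(\theta+2r\pi)/m$ modulo $\pi$. Since the angle measure is defined on lines up to sign, I expect this to reduce to a counting check. Nonorthogonality also ensures that the degree-$m$ leading coefficient and the case $\cos\theta=0$ cause no degeneracy.

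\textbf{(M1) from a single sector vector.} It remains to show that one first sector vector in $R^n$ yields all $\bm{c}_j$. Rotation by $\phi$ in the plane has matrix entries in $F$ with respect to the basis $\{\bm{a},\bm{u}\}$ when $t\in F$. Indeed, $\cos\phi$ and $\sin\phi$ are each $F$-rational multiples of a common factor $\rho^{-1}$ after the scaling by $s$. Hence $\bm{c}_j\parallel (t+is)^j$ in coordinates $(\bm{a},\bm{u}/s)$, with coordinates $\operatorname{Re}(t+is)^j$ and $s^{-1}\operatorname{Im}(t+is)^j\cdot s$, which both lie in $F$. Clearing denominators by some $\lambda\in R$ places every $\bm{c}_j$ in $R^n$. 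The explicit formula $\lambda((t-p)\bm{a}+|\bm{a}|^2\bm{b})$ is the case $j=1$.
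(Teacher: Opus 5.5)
Your forward half is sound and is essentially the paper's argument. A sector vector in $R^n$ lies in $F\bm a+F\bm b$, which forces $t=s\cot\phi\in F$ (Lemma \ref{lem-2}), and de Moivre shows this $t$ is a root of \eqref{eq-deg-m} (Lemma \ref{lem-3}). Propagating along the powers $(t+is)^j$ is a valid substitute for the Householder reflections of Proposition \ref{prop-ref} and Corollary \ref{cor-multi}.

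\textbf{The gap is the converse, which you leave at ``I expect''.} A root $t=s\cot\phi$ only gives $m\phi\equiv\theta\pmod{\pi}$. You neither rule out $m\phi\equiv\theta+\pi\pmod{2\pi}$, nor reconcile a first sector at $(\theta+2r\pi)/m$ with $r\neq0$ with (M2), which concerns $\theta/m$ itself. Neither point can be settled as stated.

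\textbf{Odd $m$.} Your $-\bm c$ remark does rescue (M3)$\Rightarrow$(M1), since $m(\phi+\pi)\equiv m\phi+\pi\pmod{2\pi}$; so (M3) gives (M1) for some $r$. But (M2) can still fail. Take $\bm a=(1,0)$, $\bm b=(-9,13)$, $m=3$. Then $p=-9$, $s=13$, and the cubic is $(t+39)(t^2-12t-39)$. The root $t=-39$ yields the trisection $(1,0),(-3,1),(4,-3),\bm b$ with $r=1$ (as $(-3+i)^3=-18+26i$). However, $s\cot(\theta/3)=6+5\sqrt3\notin\mathbb Q$, so (M2) fails. If one reads the definition literally, $j=m$ forces $r=0$, and the same example breaks (M3)$\Rightarrow$(M1) instead.

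\textbf{Even $m$.} The shift by $\pi$ does not change $m\phi$ modulo $2\pi$, and (M3) is strictly weaker than (M1). Take $\bm a=(1,1,0)$, $\bm b=(0,1,1)\in\mathbb Z^3$, so $\theta=\pi/3$, $p=1$, $s^2=3$, and $m=4$. The quartic factors as $(t^2+2t-3)(t^2-6t-3)$. Its rational roots $1,-3$ equal $\sqrt3\cot\phi$ for $\phi=\pi/3,5\pi/6$, where $4\phi\equiv\theta+\pi\pmod{2\pi}$. Indeed, $t=1$ gives $(t-p)\bm a+|\bm a|^2\bm b=2\bm b$, and rotating four times lands on $-\bm b$. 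The roots attached to genuine quadrisections, $\sqrt3\cot(\pi/12)$ and $\sqrt3\cot(7\pi/12)$, equal $3\pm2\sqrt3$ and are irrational. So by Lemma \ref{lem-2} the angle is not quadrisectable over $\mathbb Z$, consistent with Theorem \ref{thm-pw2} since $\cos(\pi/6)\notin\mathbb Q$. Passing to lines does not help, because the definition demands $\bm c_m=\bm b$.

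\textbf{The paper's proof has the same hole}, so you cannot close it by following it. Lemma \ref{lem-3} also yields only $\theta\equiv m\varphi\pmod\pi$. The proof of (M3)$\Rightarrow$(M1) then simply declares $(t-p)\bm a+|\bm a|^2\bm b$ a first $m$-sector vector. Likewise, ``(M1)$\Rightarrow$(M2) by Lemma \ref{lem-2}'' only yields $\frac1s\tan\frac{\theta+2r\pi}{m}\in F$.

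\textbf{What your method does prove}, with directed angles and arbitrary $r$:
\begin{enumerate}
\item[(a)] (M2)$\Rightarrow$(M1)$\Rightarrow$(M3).
\item[(b)] (M1) holds iff $\frac1s\tan\frac{\theta+2r\pi}{m}\in F$ for some $r\in\mathbb Z$.
\item[(c)] (M1)$\Leftrightarrow$(M3) when $m$ is odd.
\item[(d)] For even $m$, (M1) holds iff \eqref{eq-deg-m} has a root $t\in F$ with $p\sum_i(-s^2)^i\binom{m}{2i}t^{m-2i}>0$.
\end{enumerate}
The criterion in (d) works because that sum equals $\operatorname{Re}(t+is)^m=(s/\sin\phi)^m\cos m\phi$. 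Moreover, $\cos m\phi=\pm\cos\theta$, with the plus sign exactly when $m\phi\equiv\theta\pmod{2\pi}$.
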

For $m = 2,$ $3,$ $4,$ $5,$ $6,$ the polynomials in \eqref{eq-deg-m} are given by 
\begin{align} 
&t^2-2pt-s^2, \label{eq-deg-2} \\ 
&t^3-3pt^2-3s^2t+ps^2, \label{eq-deg-3} \\ 
&t^4-4pt^3-6s^2t^2+4ps^2t+s^4, \notag \\ 
&t^5-5pt^4-10s^2t^3+10ps^2t^2+5s^4t-ps^4, \notag \\ 
&t^6-6pt^5-15s^2t^4+20ps^2t^3+15s^4t^2-6ps^4t-s^6, \notag 
\end{align} 
respectively.\par
Combining Theorem \ref{thm-multi} with the rational root theorem, we immediately obtain the following result.
\begin{cor}
Let $F$ be an algebraic number field with class number $1,$ and let $R$ be its ring of integers. 
Let $\bm{a},$ $\bm{b} \in R^n$ be linearly independent and nonorthogonal vectors. 
Then $\angle (\bm{a},\bm{b})$ is $m$-sectable over $R$ if and only if \eqref{eq-deg-m} has a root in $R.$ 
The possible roots of \eqref{eq-deg-m} in $R$ are restricted to the divisors of $s^m$ if $m$ is even, or those of $ps^{m-1}$ if $m$ is odd.
\end{cor}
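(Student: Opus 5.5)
The corollary follows from the equivalence (M1)$\Leftrightarrow$(M3) of Theorem \ref{thm-multi}, combined with the rational root theorem over $R$. Since $F$ has class number $1$, its ring of integers $R$ is a principal ideal domain, hence a UFD that is integrally closed in $F$. Theorem \ref{thm-multi} says $\angle(\bm{a},\bm{b})$ is $m$-sectable over $R$ if and only if \eqref{eq-deg-m} has a root in $F$. It therefore suffices to show two things: that a root in $F$ of \eqref{eq-deg-m} automatically lies in $R$, and that any root in $R$ divides the constant term.

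First we check that \eqref{eq-deg-m} is a monic polynomial in $R[t]$. Its $t^m$ coefficient comes from the $i=0$ term of the first sum, namely $\binom{m}{0}=1$. Its other coefficients are integer multiples of $p=\langle\bm{a},\bm{b}\rangle\in R$ and of powers of $s^2=|\bm{a}|^2|\bm{b}|^2-\langle\bm{a},\bm{b}\rangle^2\in R$, so they lie in $R$. Since $R$ is integrally closed, any root $t\in F$ of a monic polynomial over $R$ is integral over $R$ and hence lies in $R$. Concretely, write $t=u/v$ in lowest terms using unique factorization; clearing denominators gives $u^m \equiv 0 \pmod v$, which forces $v$ to be a unit. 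So "a root in $F$" and "a root in $R$" are the same condition, which proves the first assertion.

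Next, if $t\in R$ is a root, then moving every term except the constant term $c_0$ to the other side gives $t\cdot(\text{element of }R)=-c_0$, so $t\mid c_0$ in $R$. We compute $c_0$ by setting $t=0$.
\begin{itemize}
\item For $m$ even, the first sum contributes its $i=m/2$ term, $(-s^2)^{m/2}\binom{m}{m}=\pm s^m$. The second sum has only odd powers $t^{m-2i-1}$ and contributes nothing.
\item For $m$ odd, the first sum has only odd powers and contributes nothing. The second sum contributes its $i=(m-1)/2$ term, $-p(-s^2)^{(m-1)/2}\binom{m}{m}=\pm ps^{m-1}$.
\end{itemize}
Since divisibility is unaffected by units, the possible roots are divisors of $s^m$ for $m$ even and of $ps^{m-1}$ for $m$ odd. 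Note that $s^m$ and $ps^{m-1}$ lie in $R$ in these respective cases, since the exponents of $s$ are even.

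There is no real obstacle here; the only points needing care are confirming that the polynomial is monic with coefficients in $R$, which is what makes integral closedness applicable, and identifying the constant term correctly by parity.
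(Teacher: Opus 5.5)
Your proof is correct and is essentially the paper's argument: the equivalence (M1)$\Leftrightarrow$(M3) of Theorem~\ref{thm-multi}, combined with the rational root theorem applied to the monic polynomial \eqref{eq-deg-m} over $R$, together with your parity computation of the constant term ($\pm s^m$ or $\pm ps^{m-1}$), which spells out what the paper calls immediate. As a side remark, your integral-closedness step shows that the class-number-one hypothesis is not actually needed, since every ring of integers is integrally closed and a root $t\in R$ automatically divides the constant term.
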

\begin{eg}
Let $\bm{a} = (1,1),$ $\bm{b} = (-2,11) \in \mathbb Z^2.$ 
Then $\angle (\bm{a},\bm{b})$ is trisectable over $\mathbb Z,$ since the cubic polynomial $t^3-27t^2-507t+1521$ corresponding to \eqref{eq-deg-3} has an integral root $t = 39.$ 
A first trisector vector of $\angle (\bm{a},\bm{b})$ over $\mathbb Z$ is given by 
\[ (t-\langle\bm{a},\bm{b}\rangle )\bm{a}+|\bm{a}|^2\bm{b} = (39-9)(1,1)+2(-2,11) = 26(1,2).\]
\end{eg}
We also discuss the $m$-sectability of right angles in Section \ref{sec-multi}.\par
In Theorem \ref{thm-bisec}, (B1) is equivalent to $\cos\angle (\bm{a},\bm{b}) \in F.$ 
In Section \ref{sec-pw2}, we generalize this equivalence to the case when $m = 2^e$ with an integer $e \geq 1.$
\begin{thm}\label{thm-pw2}
Let $\bm{a},$ $\bm{b} \in R^n\setminus\{\bm{0}\},$ and let $e \geq 1$ be an integer. 
The following conditions are equivalent.
\begin{enumerate}
\item[\textup{(C1)}]
$\angle(\bm{a},\bm{b})$ is $2^e$-sectable over $R.$
\item[\textup{(C2)}]
We have 
\[\cos\frac{\angle (\bm{a},\bm{b})}{2^{e-1}} \in F.\]
\item[\textup{(C3)}]
For each $i \in \{ 0,1,\dots,e-1\},$ we have 
\[\cos\frac{\angle (\bm{a},\bm{b})}{2^i} \in F.\]
\end{enumerate}
\end{thm}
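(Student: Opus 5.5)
We prove Theorem \ref{thm-pw2} by induction on $e$, using Theorem \ref{thm-bisec} as the basic step. Write $\theta = \angle(\bm{a},\bm{b})$. By the remark after Theorem \ref{thm-bisec}, for linearly independent $\bm{a},\bm{b}\in R^n$, condition (B1) is equivalent to $\cos\theta \in F$. The degenerate case, where $\bm{a}$ and $\bm{b}$ are parallel, has to be treated separately. If $\theta = 0$, all three conditions hold trivially: take $\bm{c}_j = \bm{a}$ with $r = 0$. If $\theta = \pi$, then $\cos(\pi/2^i)\in\{-1,0\}\cup\{\cos(\pi/2^i): i\ge 2\}$, and $\cos(\pi/4)=\sqrt2/2$. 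We then check directly which $2^e$-sections of a straight angle lie in $R^n$: we need a vector orthogonal to $\bm{a}$ and, for $e\ge 2$, bisectors of right angles. The plan is to reduce this to the same chain of bisections argument below, with the first bisection done by hand via a vector $\bm{u}\in R^n$ orthogonal to $\bm{a}$ (possible since $n\ge2$).

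For the main case the structural observation is the following. A sequence $(\bm{c}_0,\dots,\bm{c}_{2^e})$ of $2^e$-equisector vectors of $\theta$ restricts to the even-indexed vectors $(\bm{c}_0,\bm{c}_2,\dots,\bm{c}_{2^e})$, which is a sequence of $2^{e-1}$-equisector vectors. Moreover, each odd-indexed $\bm{c}_{2k+1}$ bisects the angle between $\bm{c}_{2k}$ and $\bm{c}_{2k+2}$, an angle of measure $\varphi = (\theta+2r\pi)/2^e\cdot 2$ reduced into $[0,\pi]$. Conversely, given $2^{e-1}$-equisectors over $R$, we can refine them to $2^e$-equisectors exactly when each consecutive pair is bisectable over $R$. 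By Theorem \ref{thm-bisec}, that holds iff $\cos$ of the common consecutive angle lies in $F$. Since all consecutive angles are equal (all equal to $(\theta+2r\pi)/2^{e-1}$ up to sign and reduction), one bisection condition suffices. Hence (C1) for $e$ is equivalent to: (C1) for $e-1$ together with $\cos((\theta+2r\pi)/2^{e-1})\in F$ for the relevant $r$.

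Next we handle the integer $r$. We have $\cos((\theta+2r\pi)/2^{e-1})$, and we want $\cos(\theta/2^{e-1})$. The key point is that $\cos(x/2)\in F$ and $\cos((x+2\pi)/2)=-\cos(x/2)$ differ only by sign. Inductively, we choose the equisectors so that each halving step uses the bisector inside the current angle. With this choice, in the $2$-adic tower all values $\cos((\theta+2r\pi)/2^i)$ lie in $F$ iff all $\cos(\theta/2^i)$ do. We prove this by induction using the half-angle relation $\cos^2(x/2) = (1+\cos x)/2$ and the fact that changing $r$ only flips signs at each level. This yields (C1)$\Leftrightarrow$(C3). For (C3)$\Leftrightarrow$(C2), the direction (C3)$\Rightarrow$(C2) is trivial. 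For the converse, the double-angle formula $\cos x = 2\cos^2(x/2)-1$ shows that $\cos(\theta/2^{e-1})\in F$ implies $\cos(\theta/2^i)\in F$ for all $i\le e-1$.

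The main obstacle I expect is the bookkeeping with $r$ and with the reduction of angles into $[0,\pi]$, including coplanarity. In particular, the consecutive vectors in a sequence of equisectors must subtend equal angles, and the bisectors supplied by Theorem \ref{thm-bisec} (which come with a $\pm$ choice) must be chosen consistently so that the refined sequence is again a sequence of equisectors in the sense of the Definition. I would first handle this in the plane spanned by $\bm{a},\bm{b}$, using rotation by the fixed angle, and then take the inner bisector at each step.
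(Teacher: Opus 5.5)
Your route is essentially the paper's: everything is reduced, one halving at a time, to the bisection criterion of Theorem~\ref{thm-bisec}, and (C2)$\Leftrightarrow$(C3) comes from $\cos x=2\cos^2(x/2)-1$.

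The genuine gap is the first reduction you quote: that for linearly independent $\bm{a},\bm{b}$, condition (B1) is equivalent to $\cos\theta\in F$. This is false when $\langle\bm{a},\bm{b}\rangle=0$. (B4) gives $|\bm{a}||\bm{b}|\in F^\times$ and hence $\cos\theta\in F$. Going back uses $|\bm{a}||\bm{b}|=\langle\bm{a},\bm{b}\rangle/\cos\theta$, i.e.\ it needs $\cos\theta\neq 0$. For a right angle, $\cos\theta=0\in F$ says nothing, while bisectability still requires $|\bm{a}|^2\equiv|\bm{b}|^2\pmod{F^{\times 2}}$.

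This cannot be patched, because the statement itself fails there. For $\bm{a}=(1,0,0)$, $\bm{b}=(0,1,1)\in\mathbb Z^3$ and $e=1$, (C2) holds, but $1\not\equiv 2\pmod{\mathbb Q^{\times 2}}$, so (C1) fails.

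The same obstruction defeats your plan for $\theta=\pi$. An orthogonal $\bm{u}\in R^n$ settles $e=1$. For $e\ge 2$, however, you must next bisect the right angle between $\bm{a}$ and $\bm{u}$, which needs $|\bm{a}|^2|\bm{u}|^2\in F^{\times 2}$ for some such $\bm{u}$. Take $\bm{a}=(1,1,1)$, $\bm{b}=-\bm{a}$, $e=2$: condition (C2) holds, since $\cos(\pi/2)=0$. Yet every $\bm{u}=(x,y,-x-y)$ gives $3|\bm{u}|^2=6(x^2+xy+y^2)$. Since $x^2+xy+y^2$ is odd unless $x,y$ are both even, this has odd $2$-adic valuation and is never a square.

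The paper's proof has exactly the same hole: its step ``$|\bm{a}||\bm{b}|\in F^\times$, or equivalently, $\cos\theta\in F$''. What your argument, like the paper's, really proves is the theorem for $\theta\in(0,\pi)\setminus\{\pi/2\}$, plus the trivial case $\theta=0$. There every angle $\theta/2^i$ that gets bisected lies in $(0,\pi)\setminus\{\pi/2\}$. At $\theta=\pi/2$ and $\theta=\pi$ an extra norm condition is needed; it is automatic when $n=2$ but not in general.

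Separately, your handling of $r$ is wrong. Changing $r$ does not merely flip signs once $i\ge 2$; for instance $\cos((\theta+2\pi)/4)=-\sin(\theta/4)$. Also, choosing inner bisectors controls only the constructive direction. It does nothing for (C1)$\Rightarrow$(C3), where the sequence is handed to you.

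Read literally, the Definition forces $r=0$ (take $j=m$), so that paragraph should simply be dropped. Then (C1)$\Rightarrow$(C2) needs no bisection criterion at all, since $\cos(\theta/2^{e-1})=2\langle\bm{a},\bm{c}_1\rangle^2/(|\bm{a}|^2|\bm{c}_1|^2)-1\in F$.

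If $r\neq 0$ were really admitted, as Theorem~\ref{thm-multi} implicitly does, this implication would fail. Take $\bm{a}=(1,-1,0)$, $\bm{b}=(0,1,-1)$, so $\theta=2\pi/3$. The vectors $(1,-1,0),(1,0,-1),(0,1,-1),(-1,1,0),\dots$, at successive angles $\pi/3$, form an $8$-section over $\mathbb Z$ with $r=1$. It is exactly the one Theorem~\ref{thm-multi} builds from the root $t=1$. But $\cos(\theta/4)=\sqrt3/2\notin\mathbb Q$.
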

Theorem \ref{thm-pw2} enables us to verify the $2^e$-sectability of $\angle (\bm{a},\bm{b})$ over $R$ through purely algebraic calculations by the double angle formula.
\begin{eg}
Let $\bm{a} = (1,1),$ $\bm{b} = (-17,31) \in \mathbb Z^2.$ 
Then $\angle (\bm{a},\bm{b})$ is quadrisectable over $\mathbb Z,$ since 
\[\cos\angle (\bm{a},\bm{b}) = \frac{\langle\bm{a},\bm{b}\rangle}{|\bm{a}||\bm{b}|} = \frac{14}{\sqrt 2\cdot 25\sqrt 2} = \frac{7}{25}\] 
and 
\[\cos\frac{\angle (\bm{a},\bm{b})}{2} = \sqrt{\frac{1+\cos\angle (\bm{a},\bm{b})}{2}} = \sqrt{\frac{1}{2}\left(1+\frac{7}{25}\right)} = \sqrt{\frac{16}{25}} = \frac{4}{5} \in \mathbb Q.\]
\end{eg}
\section{Sequences of Angle Equisectors}\label{sec-ext}
In this section, we discuss the construction and extension of sequences of angle equisector vectors derived from Householder reflections (see \cite[Section 1.1]{Hum90}). 
We also discuss the decomposition of angle multisectability.
\subsection{Construction of Angle Equisectors via Householder Reflections}\label{subsec-const}
The following proposition is fundamental and plays an important role in practical applications, as it enables us to extend a sequence of angle equisector vectors over $R$ to an arbitrary length (see Subsection \ref{subsec-ext}).
\begin{prop}\label{prop-ref}
Let $\bm{a},$ $\bm{c} \in R^n$ be linearly independent vectors. 
Then there exists a nonzero vector $\bm{b} \in R\bm{a}+R\bm{c}$ such that $\bm{c}$ is a bisector vector of $\angle (\bm{a},\bm{b}).$ 
Such a vector $\bm{b}$ is parallel to $2\langle\bm{a},\bm{c}\rangle\bm{c}-|\bm{c}|^2\bm{a}.$
\end{prop}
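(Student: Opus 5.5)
The plan is to take for $\bm{b}$ the reflection of $\bm{a}$ across the line $\mathbb R\bm{c}$, rescaled so that it lies in $R^n$. This line is the mirror of the Householder reflection in the hyperplane orthogonal to $\bm{c}$, composed with $-1$. Concretely, the reflection of $\bm{a}$ across $\mathbb R\bm{c}$ is
\[
\sigma(\bm{a}) = 2\frac{\langle\bm{a},\bm{c}\rangle}{|\bm{c}|^2}\bm{c}-\bm{a}.
\]
Multiplying by $|\bm{c}|^2\in R\setminus\{0\}$ gives
\[
\bm{b} := 2\langle\bm{a},\bm{c}\rangle\bm{c}-|\bm{c}|^2\bm{a}.
\]
Since $\langle\bm{a},\bm{c}\rangle$ and $|\bm{c}|^2$ lie in $R$, this vector lies in $R\bm{a}+R\bm{c}$. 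It is nonzero because $\bm{a},\bm{c}$ are linearly independent and the coefficient of $\bm{a}$ is $-|\bm{c}|^2\neq 0$.

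Next I verify that $\bm{c}$ bisects $\angle(\bm{a},\bm{b})$, i.e.\ that $(\bm{a},\bm{c},\bm{b})$ is a sequence of $2$-equisector vectors. The three vectors are coplanar, lying in $\mathbb R\bm{a}+\mathbb R\bm{c}$. Direct computation gives
\[
\langle\bm{b},\bm{c}\rangle = 2\langle\bm{a},\bm{c}\rangle|\bm{c}|^2-|\bm{c}|^2\langle\bm{a},\bm{c}\rangle = |\bm{c}|^2\langle\bm{a},\bm{c}\rangle,
\qquad
|\bm{b}|^2 = 4\langle\bm{a},\bm{c}\rangle^2|\bm{c}|^2-4\langle\bm{a},\bm{c}\rangle^2|\bm{c}|^2+|\bm{c}|^4|\bm{a}|^2 = |\bm{c}|^4|\bm{a}|^2.
\]
Hence $|\bm{b}| = |\bm{c}|^2|\bm{a}|$ and
\[
\cos\angle(\bm{b},\bm{c}) = \frac{\langle\bm{a},\bm{c}\rangle}{|\bm{a}||\bm{c}|} = \cos\angle(\bm{a},\bm{c}),
\]
so $\angle(\bm{a},\bm{c})=\angle(\bm{c},\bm{b})=:\varphi$.

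The main obstacle is that equal angles do not by themselves give the equisector identity $\angle(\bm{a},\bm{c}) = \frac12(\angle(\bm{a},\bm{b})+2r\pi)$. I also have to check that $\bm{a}$ and $\bm{b}$ lie on opposite sides of $\mathbb R\bm{c}$ in the plane, and choose $r$ correctly. To handle this, work in an orthonormal basis of the plane in which $\bm{c}$ points along angle $0$ and $\bm{a}$ is at angle $\varphi\in(0,\pi)$; this range holds by linear independence. Since $\sigma$ is the reflection across $\mathbb R\bm{c}$, $\bm{b}$ is a positive multiple of $\sigma(\bm{a})$ and therefore sits at angle $-\varphi$. The oriented angle from $\bm{a}$ to $\bm{b}$ is then $2\varphi$ (going through $\bm{c}$).
\begin{itemize}
\item If $2\varphi\le\pi$, then $\angle(\bm{a},\bm{b})=2\varphi$, and we take $r=0$.
\item If $2\varphi>\pi$, then $\angle(\bm{a},\bm{b})=2\pi-2\varphi$. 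Taking $r=-1$ gives $\frac12(\angle(\bm{a},\bm{b})-2\pi) = -\varphi$, and $\angle(\bm{a},\bm{c})=\varphi$ still holds under the unsigned convention.
\end{itemize}
I would phrase this carefully, matching how the definition's angles are to be read, perhaps via signed angles in the plane.

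Finally, for uniqueness up to parallelism, suppose $\bm{b}'\neq\bm{0}$ in the plane has $\bm{c}$ as a bisector of $\angle(\bm{a},\bm{b}')$. Then the direction of $\bm{b}'$ makes angle $\varphi$ with $\bm{c}$ on the side opposite to $\bm{a}$. The only such direction is $\sigma(\bm{a})$, because the other direction at angle $\varphi$ from $\bm{c}$ is $\bm{a}$ itself, which is excluded since $\bm{c}$ is not parallel to $\bm{a}$. Hence $\bm{b}'$ is parallel to $2\langle\bm{a},\bm{c}\rangle\bm{c}-|\bm{c}|^2\bm{a}$.
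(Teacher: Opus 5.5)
Your proof is correct and follows essentially the same route as the paper: you use the same vector $\bm{b} = 2\langle\bm{a},\bm{c}\rangle\bm{c}-|\bm{c}|^2\bm{a}$ and the same computation $|\bm{b}| = |\bm{c}|^2|\bm{a}|$. The differences are minor, and some are improvements. You compare cosines via $\langle\bm{b},\bm{c}\rangle = |\bm{c}|^2\langle\bm{a},\bm{c}\rangle$, so you need no separate case $\langle\bm{a},\bm{c}\rangle = 0$, whereas the paper shows $\bm{a}+\frac{|\bm{a}|}{|\bm{b}|}\bm{b}$ is a multiple of $\bm{c}$. You also treat explicitly two points the paper leaves implicit: the choice of $r$ when $\langle\bm{a},\bm{c}\rangle<0$ (where $\bm{c}$ is the negative of the internal bisector), and uniqueness up to parallelism.
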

\begin{pf}
Let $\bm{b} = 2\langle\bm{a},\bm{c}\rangle\bm{c}-|\bm{c}|^2\bm{a}.$ 
It suffices to show that $\angle (\bm{a},\bm{c}) = \angle (\bm{b},\bm{c}).$\par
If $\langle\bm{a},\bm{c}\rangle = 0,$ then $\angle (\bm{a},\bm{c}) = \angle (\bm{b},\bm{c}) = \pi /2,$ since $\bm{b} = -|\bm{c}|^2\bm{a}.$\par
Suppose that $\langle\bm{a},\bm{c}\rangle \neq 0.$ 
Then 
\[ |\bm{b}|^2 = 4\langle\bm{a},\bm{c}\rangle ^2|\bm{c}|^2-4\langle\bm{a},\bm{c}\rangle ^2|\bm{c}|^2+|\bm{c}|^4|\bm{a}|^2 = |\bm{c}|^4|\bm{a}|^2,\] 
which implies $|\bm{b}| = |\bm{c}|^2|\bm{a}|.$ 
Therefore, a bisector vector of $\angle (\bm{a},\bm{b})$ is given by 
\[\bm{a}+\frac{|\bm{a}|}{|\bm{b}|}\bm{b} = \bm{a}+\frac{1}{|\bm{c}|^2}(2\langle\bm{a},\bm{c}\rangle\bm{c}-|\bm{c}|^2\bm{a}) = \frac{2\langle\bm{a},\bm{c}\rangle}{|\bm{c}|^2}\bm{c},\] 
which is parallel to $\bm{c}.$ 
Thus, we conclude that $\angle (\bm{a},\bm{c}) = \angle (\bm{b},\bm{c}).$
\end{pf}
\begin{eg}
Let $\bm{c}_2 = (1,1)$ and $\bm{c}_3 = (1,2).$  
Then 
\[ 2\langle\bm{c}_2,\bm{c}_3\rangle\bm{c}_3-|\bm{c}_3|^2\bm{c}_2 = 2\cdot 3(1,2)-5(1,1) = (1,7).\] 
Let $\bm{c}_4 = (1,7).$ 
Then $\bm{c}_3$ is a bisector vector of $\angle (\bm{c}_2,\bm{c}_4).$ 
Furthermore, let $\bm{c}_0 = (7,1)$ and $\bm{c}_1 = (2,1).$ 
Then $\bm{c}_1$ is a bisector vector of $\angle (\bm{c}_0,\bm{c}_2),$ and $\bm{c}_2$ is a bisector vector of $\angle (\bm{c}_1,\bm{c}_3),$ since the points $(1,2)$ and $(1,7)$ are symmetric to $(2,1)$ and $(7,1),$ respectively, across the line $x = y$ with direction vector $\bm{c}_2.$
Thus, $\angle (\bm{c}_0,\bm{c}_4)$ has a sequence $(\bm{c}_1,\bm{c}_2,\bm{c}_3)$ of quadrisector vectors.
\end{eg}
\begin{eg}
Let $\bm{c}_2 = (1,1,1)$ and $\bm{c}_3 = (1,2,3).$ 
Then 
\[ 2\langle\bm{c}_2,\bm{c}_3\rangle\bm{c}_3-|\bm{c}_3|^2\bm{c}_2 = 2\cdot 6(1,2,3)-14(1,1,1) = 2(-1,5,11).\] 
Let $\bm{c}_4 = (-1,5,11).$ 
Then $\bm{c}_3$ is a bisector vector of $\angle (\bm{c}_2,\bm{c}_4).$ 
Furthermore, let $\bm{c}_0 = (11,5,-1)$ and $\bm{c}_1 = (3,2,1).$ 
Then $\bm{c}_1$ is a bisector vector of $\angle (\bm{c}_0,\bm{c}_2),$ and $\bm{c}_2$ is a bisector vector of $\angle (\bm{c}_1,\bm{c}_3),$ since the points $(1,2,3)$ and $(-1,5,11)$ are symmetric to $(3,2,1)$ and $(11,5,-1),$ respectively, across the line $x = y = z$ with direction vector $\bm{c}_2.$ 
Thus, $\angle (\bm{c}_0,\bm{c}_4)$ has a sequence $(\bm{c}_1,\bm{c}_2,\bm{c}_3)$ of quadrisector vectors. 
That is, in the $xyz$-space, the acute angle between the two lines 
\[\frac{x}{11} = \frac{y}{5} = -z \quad\text{and}\quad -x = \frac{y}{5} = \frac{z}{11}\] 
has a sequence of quadrisector lines 
\[\frac{x}{3} = \frac{y}{2} = z, \quad x = y = z, \quad\text{and}\quad x = \frac{y}{2} = \frac{z}{3}.\]
\end{eg}
\subsection{Recursive Extensions of Sequences of Angle Equisectors}\label{subsec-ext}
Proposition \ref{prop-ref} immediately yields the following three corollaries.
\begin{cor}\label{cor-infty}
For any nonzero vectors $\bm{c}_0,$ $\bm{c}_1 \in R^n,$ there exists an infinite sequence $(\bm{c}_i)_{i \geq 0}$ of nonzero vectors in $R\bm{c}_0+R\bm{c}_1$ such that 
\[\angle (\bm{c}_0,\bm{c}_1) = \angle (\bm{c}_1,\bm{c}_2) = \cdots = \angle (\bm{c}_{i-1},\bm{c}_i) = \cdots.\]
\end{cor}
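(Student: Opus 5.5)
The plan is to build the sequence recursively, applying Proposition \ref{prop-ref} at each step with $\bm{a} = \bm{c}_{i-1}$ and $\bm{c} = \bm{c}_i$. For $i \geq 1$ I would set
\[\bm{c}_{i+1} = 2\langle\bm{c}_{i-1},\bm{c}_i\rangle\bm{c}_i-|\bm{c}_i|^2\bm{c}_{i-1}.\]
Then I would check three properties by induction on $i$: that $\bm{c}_{i+1}$ lies in $R\bm{c}_0+R\bm{c}_1$, that it is nonzero, and that $\angle(\bm{c}_i,\bm{c}_{i+1}) = \angle(\bm{c}_{i-1},\bm{c}_i)$. Chaining these equalities gives the statement.

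\emph{Membership.} Since $\bm{c}_{i-1},\bm{c}_i \in R^n$, both coefficients $2\langle\bm{c}_{i-1},\bm{c}_i\rangle$ and $|\bm{c}_i|^2$ lie in $R$. Hence $\bm{c}_{i+1} \in R\bm{c}_{i-1}+R\bm{c}_i$. By the induction hypothesis this is contained in $R\bm{c}_0+R\bm{c}_1$.

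\emph{Nonzero and angle, independent case.} Assume first that $\bm{c}_0,\bm{c}_1$ are linearly independent. The map $\bm{x} \mapsto 2\frac{\langle\bm{x},\bm{c}_i\rangle}{|\bm{c}_i|^2}\bm{c}_i-\bm{x}$ is the reflection across the line $\mathbb R\bm{c}_i$, composed with $-1$ on the orthogonal complement of that line. It is an invertible linear isometry of the plane $\mathbb R\bm{c}_0+\mathbb R\bm{c}_1$. Up to the positive factor $|\bm{c}_i|^2$, it sends $\bm{c}_{i-1}$ to $\bm{c}_{i+1}$.

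Consequently $\bm{c}_i,\bm{c}_{i+1}$ are again linearly independent, since $\bm{c}_{i-1},\bm{c}_i$ are and the map fixes $\bm{c}_i$. So the hypothesis of Proposition \ref{prop-ref} holds at every step. That proposition then gives $\angle(\bm{c}_{i-1},\bm{c}_i) = \angle(\bm{c}_{i+1},\bm{c}_i)$. The norm computation in its proof gives $|\bm{c}_{i+1}| = |\bm{c}_i|^2|\bm{c}_{i-1}| \neq 0$.

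\emph{Dependent case.} Proposition \ref{prop-ref} assumes independence, so this case needs a direct check. Write $\bm{c}_1 = \mu\bm{c}_0$ with $\mu \in \mathbb R^\times$. The same formula gives
\[\bm{c}_2 = 2\mu^2|\bm{c}_0|^2\bm{c}_0-\mu^2|\bm{c}_0|^2\bm{c}_0 = \mu^2|\bm{c}_0|^2\bm{c}_0.\]
This is a positive multiple of $\bm{c}_0$. Hence $\angle(\bm{c}_1,\bm{c}_2) = \angle(\bm{c}_1,\bm{c}_0)$, which is $0$ or $\pi$, and the same computation repeats at every later step. Alternatively, one can simply alternate $\bm{c}_0,\bm{c}_1,\bm{c}_0,\bm{c}_1,\dots$.

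The only step needing any care is this degenerate case, together with confirming that independence persists so that Proposition \ref{prop-ref} applies at every step. Everything else is immediate from that proposition.
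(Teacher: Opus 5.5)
Your proof is correct and is the argument the paper intends: iterate Proposition \ref{prop-ref} with $\bm{c}_{i+1} = 2\langle\bm{c}_{i-1},\bm{c}_i\rangle\bm{c}_i-|\bm{c}_i|^2\bm{c}_{i-1}$, which the paper only asserts follows immediately. You also supply two details the paper leaves implicit: that linear independence propagates, so the proposition applies at every step, and the linearly dependent case, which Proposition \ref{prop-ref} does not cover.
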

\begin{cor}\label{cor-fin}
A given sequence of angle equisector vectors over $R$ can be extended to an arbitrary length. 
Specifically, for any integers $m_1$ and $m_2$ such that $2 \leq m_1 < m_2,$ any sequence of angle $m_1$-equisector vectors over $R$ can be extended to a sequence of angle $m_2$-equisector vectors over $R$ by appending $m_2-m_1$ suitable vectors in $R^n.$
\end{cor}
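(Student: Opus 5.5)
The plan is to build the extension one vector at a time with Proposition~\ref{prop-ref}, and then check that the resulting sequence meets the definition of angle equisector vectors. Start from a sequence $(\bm{c}_0,\bm{c}_1,\dots,\bm{c}_{m_1})$ of $m_1$-equisector vectors over $R$. Write $\theta = \frac{1}{m_1}(\angle(\bm{c}_0,\bm{c}_{m_1})+2r\pi)$, so that $\angle(\bm{c}_0,\bm{c}_j) = j\theta$ for $0 \leq j \leq m_1$ (angles understood as signed rotation angles in the plane $P$ spanned by the sequence). All $\bm{c}_j$ lie in $P$, and consecutive vectors make the same angle $\theta$.

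If $\theta \in \pi\mathbb Z$, all the vectors are parallel up to sign. In that case we take $\bm{c}_{j+1} = \pm\bm{c}_j$ in the pattern dictated by $\theta$, and the extension is trivial. So assume $\bm{c}_{j-1}$ and $\bm{c}_j$ are linearly independent for $j \geq 1$.

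Inductively, for $j = m_1, m_1+1, \dots, m_2-1$, apply Proposition~\ref{prop-ref} to the pair $(\bm{c}_{j-1},\bm{c}_j)$. Take $\bm{c}_{j+1} = 2\langle\bm{c}_{j-1},\bm{c}_j\rangle\bm{c}_j-|\bm{c}_j|^2\bm{c}_{j-1} \in R\bm{c}_{j-1}+R\bm{c}_j \subset R^n$. This vector is the image of $-|\bm{c}_j|^2\bm{c}_{j-1}$ under the Householder reflection $H$ in the hyperplane orthogonal to $\bm{c}_j$. Equivalently, up to the positive factor $|\bm{c}_j|^2$, it is the reflection of $\bm{c}_{j-1}$ across the line $\mathbb R\bm{c}_j$ within $P$.

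This vector is nonzero because $\bm{c}_{j-1}$ and $\bm{c}_j$ are independent, and it lies in $P$, so coplanarity is preserved. Since reflection across $\mathbb R\bm{c}_j$ in $P$ maps the direction at signed angle $(j-1)\theta$ to the direction at signed angle $2j\theta-(j-1)\theta = (j+1)\theta$, we get $\angle(\bm{c}_0,\bm{c}_{j+1}) = (j+1)\theta$ as a signed angle. The signed-angle formalism is what makes this clean: reflecting about the line at angle $\varphi$ sends angle $\alpha$ to $2\varphi-\alpha$.

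Finally, for $m_2$ define $r'$ by $m_2\theta = \angle(\bm{c}_0,\bm{c}_{m_2}) + 2r'\pi$, with the unsigned angle in $[0,\pi]$ and the sign convention fixed by choosing the orientation of $P$. Then $\angle(\bm{c}_0,\bm{c}_j) = \frac{j}{m_2}(\angle(\bm{c}_0,\bm{c}_{m_2})+2r'\pi)$ for every $j$, which is exactly the definition of a sequence of $m_2$-equisector vectors with $\bm{a} = \bm{c}_0$ and $\bm{b} = \bm{c}_{m_2}$.

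The main obstacle is this last step: reconciling the paper's definition, stated with unsigned angles in $[0,\pi]$ and an integer $r$, with the signed-angle bookkeeping. When $m_2\theta$ reduces modulo $2\pi$ to a negative value, one may need to reverse the orientation, which replaces $\theta$ by $-\theta$. I would handle this by interpreting the definition's angles as oriented angles in $P$ (as the presence of $2r\pi$ suggests), and absorbing the orientation choice into the sign of $r$.
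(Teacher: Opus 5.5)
Your proposal is correct and follows the paper's argument. The paper only remarks that Corollary~\ref{cor-fin} follows immediately from Proposition~\ref{prop-ref}, namely by repeatedly appending $2\langle\bm{c}_{j-1},\bm{c}_j\rangle\bm{c}_j-|\bm{c}_j|^2\bm{c}_{j-1}\in R^n$ so that $\bm{c}_j$ bisects $\angle(\bm{c}_{j-1},\bm{c}_{j+1})$; your reflection and oriented-angle bookkeeping, including the degenerate case $\theta\in\pi\mathbb Z$ and the orientation choice that fixes $r$, supplies the details it leaves implicit.
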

\begin{cor}\label{cor-multi}
Let $\bm{a},$ $\bm{b} \in R^n\setminus\{\bm{0}\}.$ 
In a sequence of $m$-equisector vectors $(\bm{c}_0,\bm{c}_1,\dots,\bm{c}_m)$ of $\angle (\bm{a},\bm{b})$ over $\mathbb R,$ if $\bm{c}_{i-1}$ and $\bm{c}_i$ can be taken in $R^n$ for some $i \in \{ 1,\dots,m\},$ then $\angle (\bm{a},\bm{b})$ is $m$-sectable over $R.$
\end{cor}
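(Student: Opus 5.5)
The plan is to exploit the reflection construction of Proposition \ref{prop-ref}. Equisector vectors are determined up to positive scaling, and successive vectors are related by reflection across the line of the middle one. Let $\theta = \angle(\bm{a},\bm{b})$ and $\varphi = (\theta+2r\pi)/m$. Fix a sequence $(\bm{c}_0,\dots,\bm{c}_m)$ of $m$-equisector vectors of $\angle(\bm{a},\bm{b})$ over $\mathbb R$, all lying in the plane spanned by $\bm{a}$ and $\bm{b}$, with $\bm{c}_0 = \bm{a}$ and $\bm{c}_m = \bm{b}$. By hypothesis, $\bm{c}_{i-1}$ and $\bm{c}_i$ are positive real multiples of vectors $\bm{d}_{i-1},\bm{d}_i \in R^n$. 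We may replace $\bm{c}_{i-1},\bm{c}_i$ by $\bm{d}_{i-1},\bm{d}_i$, since the angle conditions only depend on directions. The aim is to propagate membership in $R^n$ outward in both directions, one index at a time.

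For the step to the right, suppose $\bm{d}_{j-1},\bm{d}_j \in R^n$ are positive multiples of $\bm{c}_{j-1},\bm{c}_j$ with $j<m$. Apply Proposition \ref{prop-ref} with $\bm{a} := \bm{d}_{j-1}$ and $\bm{c} := \bm{d}_j$. This gives
\[\bm{d}_{j+1} = 2\langle\bm{d}_{j-1},\bm{d}_j\rangle\bm{d}_j-|\bm{d}_j|^2\bm{d}_{j-1} \in R^n.\]
The map $\bm{x}\mapsto 2\frac{\langle \bm{x},\bm{d}_j\rangle}{|\bm{d}_j|^2}\bm{d}_j-\bm{x}$ is the reflection of the plane across the line $\mathbb R\bm{d}_j$. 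Hence $\bm{d}_{j+1}$ is a positive multiple of the reflection of $\bm{d}_{j-1}$, namely $|\bm{d}_j|^2$ times it.

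I then have to check that this reflection points in the direction of $\bm{c}_{j+1}$. I will work in an orthonormal coordinate system of the plane in which $\bm{c}_k$ has direction angle $k\varphi$. This is exactly what the equisector condition encodes: the vectors rotate by a fixed signed angle $\varphi$ in the plane, including the case where $\varphi$ exceeds $\pi$ and angles are measured with the winding number $r$. Reflection across the line at angle $j\varphi$ sends angle $(j-1)\varphi$ to $(j+1)\varphi$, so $\bm{d}_{j+1}$ is a positive multiple of $\bm{c}_{j+1}$. Symmetrically, for $j>0$, reflecting $\bm{d}_{j+1}$ across $\bm{d}_j$ gives $\bm{d}_{j-1}$. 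Induction in both directions then produces $\bm{d}_0,\dots,\bm{d}_m \in R^n$ with $\bm{d}_k$ a positive multiple of $\bm{c}_k$.

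It remains to match the endpoints. The vector $\bm{d}_0$ is a positive real multiple of $\bm{a}$, but the definition requires the first vector to equal $\bm{a}$ exactly, and similarly the last to equal $\bm{b}$. I will simply set $\bm{c}_0' = \bm{a}$, $\bm{c}_m' = \bm{b}$ and $\bm{c}_k' = \bm{d}_k$ for $0<k<m$. All of these lie in $R^n$, are nonzero and coplanar, and satisfy $\angle(\bm{a},\bm{c}_k') = \angle(\bm{a},\bm{c}_k) = \frac{k}{m}(\theta+2r\pi)$. This gives (M1)-type $m$-sectability over $R$.

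The main obstacle I expect is making the notion of "angle" with the winding integer $r$ rigorous enough to justify the reflection step. The quantity $\angle(\bm{a},\bm{c}_j)$ in the definition is literally in $[0,\pi]$ only when $r=0$. So I will first fix the convention by a planar parametrization: choose an oriented orthonormal basis of $\mathrm{span}(\bm{a},\bm{b})$ and write $\bm{c}_k \propto (\cos k\varphi,\sin k\varphi)$. Once this is in place, the reflection identity is a direct trigonometric computation.
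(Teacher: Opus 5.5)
Your proposal is correct and follows the paper's argument: the paper says the corollary follows ``immediately'' from Proposition \ref{prop-ref}. That means repeatedly applying $\bm{x}\mapsto 2\langle\bm{x},\bm{d}_j\rangle\bm{d}_j-|\bm{d}_j|^2\bm{x}$ to carry membership in $R^n$ outward in both directions from the two consecutive vectors, as in the worked examples of Subsection \ref{subsec-const}. Your explicit signed-angle parametrization and the resetting of the endpoints to $\bm{a}$ and $\bm{b}$ supply details the paper leaves implicit; one small fix is to say ``a common plane containing the sequence'' rather than ``the plane spanned by $\bm{a}$ and $\bm{b}$,'' since the corollary allows $\bm{a}$ and $\bm{b}$ to be parallel.
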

Corollary \ref{cor-multi} is used in the proofs of Theorems \ref{thm-multi} and \ref{thm-pw2}.
\subsection{Decomposition of Angle Multisectability}\label{subsec-decomp}
The construction of sequences of angle equisector vectors described above leads to the following result.
\begin{prop}\label{prop-decomp}
Let $\bm{a},$ $\bm{b} \in R^n\setminus\{\bm{0}\}.$ 
Let $m_1,$ $\dots,$ $m_r \geq 2$ be pairwise coprime integers, and let $m = m_1\cdots m_r.$ 
Then the following conditions are equivalent.
\begin{enumerate}
\item[{\rm (i)}]
$\angle (\bm{a},\bm{b})$ is $m$-sectable over $R.$
\item[{\rm (ii)}]
$\angle (\bm{a},\bm{b})$ is $m_i$-sectable over $R$ for each $i \in \{ 1,\dots,r\}.$
\end{enumerate}
\end{prop}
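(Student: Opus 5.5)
The implication (i)$\Rightarrow$(ii) is immediate by thinning out a sequence. For (ii)$\Rightarrow$(i) I would use B\'ezout together with the Householder reflection of Proposition \ref{prop-ref}. Throughout, I work with directed angles in the plane $P$ containing $\bm{a},\bm{b}$, measured from $\bm{a}$ and taken mod $2\pi$; write $\theta=\angle(\bm{a},\bm{b})$. The degenerate case where $\bm{a},\bm{b}$ are parallel would be handled by the same argument inside any plane through $\bm{a}$ that contains the given equisector vectors.

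\emph{(i)$\Rightarrow$(ii).} Let $(\bm{c}_0,\dots,\bm{c}_m)$ be $m$-equisector vectors of $\angle(\bm{a},\bm{b})$ with integer $r$, and fix $i$. Set $k=m/m_i$ and take the subsequence $(\bm{c}_0,\bm{c}_k,\bm{c}_{2k},\dots,\bm{c}_{m_ik})$. Its $j$-th member makes angle $\frac{jk}{m}(\theta+2r\pi)=\frac{j}{m_i}(\theta+2r\pi)$ with $\bm{a}$. So these vectors are $m_i$-equisector vectors over $R$, with the same $r$.

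\emph{(ii)$\Rightarrow$(i).} By induction on $r$ it suffices to treat two coprime factors $m_1,m_2$, since $m_1\cdots m_{r-1}$ is coprime to $m_r$. Hypothesis (ii) gives vectors $\bm{u},\bm{v}\in R^n\cap P$ at directed angles
\[\varphi_1=\frac{\theta+2r_1\pi}{m_1}, \qquad \varphi_2=\frac{\theta+2r_2\pi}{m_2}\]
from $\bm{a}$. Choose integers $x,y$ with $xm_1+ym_2=1$. Then
\[x\varphi_2+y\varphi_1=\frac{\theta+2\pi(xm_1r_2+ym_2r_1)}{m}.\]
This has exactly the form $(\theta+2r'\pi)/m$ required of a first $m$-sector vector. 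It therefore suffices to produce a vector $\bm{c}\in R^n\cap P$ at directed angle $x\varphi_2+y\varphi_1$. Corollary \ref{cor-multi}, applied with $\bm{c}_0=\bm{a}$ and $\bm{c}_1=\bm{c}$, then yields $m$-sectability.

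\emph{Construction of $\bm{c}$.} In Proposition \ref{prop-ref}, the vector $2\langle\bm{x},\bm{z}\rangle\bm{z}-|\bm{z}|^2\bm{x}$ is $|\bm{z}|^2$ times the reflection of $\bm{x}$ across the line $\mathbb R\bm{z}$. Hence it has directed angle exactly $2\delta-\gamma$ when $\bm{x},\bm{z}$ have angles $\gamma,\delta$. The sign is correct because $|\bm{z}|^2>0$, and the new vector lies in $R^n\cap P$. Let $S$ be the set of angles realizable in $R^n\cap P$. Then $S$ contains $0,\varphi_1,\varphi_2$ and is closed under $(\gamma,\delta)\mapsto 2\delta-\gamma$. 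I claim that $S$ contains every $c_1\varphi_1+c_2\varphi_2$ with $c_1,c_2$ integers not both odd.
\begin{itemize}
\item[(a)] Iterating $2\delta-\gamma$ along $0,\varphi_1$ in both directions gives all $k\varphi_1$, $k\in\mathbb Z$; this is Corollary \ref{cor-infty} together with its mirror image. Likewise $S$ contains all $k\varphi_2$.
\item[(b)] Combining, $2(k\varphi_1)-l\varphi_2$ and $2(l\varphi_2)-k\varphi_1$ lie in $S$.
\item[(c)] A short induction then reaches all $c_1\varphi_1+c_2\varphi_2$ with $(c_1,c_2)\not\equiv(1,1)\pmod 2$. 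The intended invariant is that the operation preserves integer affine combinations of $0,\varphi_1,\varphi_2$, while the coefficient vector mod $2$ stays in $\{(1,0,0),(0,1,0),(0,0,1)\}$.
\end{itemize}

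\emph{Parity adjustment.} If $x,y$ are both odd, replace $(x,y)$ by $(x+m_2,\,y-m_1)$; this preserves $xm_1+ym_2=1$. Since $m_1,m_2$ are coprime, at least one of them is odd, so this flips the parity of $y$ or of $x$. The result is not both odd, and the target angle only changes $r'$.

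I expect the main obstacle to be the bookkeeping in (c). One has to show that the reflection closure reaches precisely the needed lattice points, and track the directed angles carefully, including which representative mod $2\pi$ and the orientation. I would first try the explicit sequence: build $y\varphi_1$ from (a), then reach $y\varphi_1+x\varphi_2$ by repeated reflections alternating between $\varphi_2$-steps. This is the standard way that products of two reflections give translations by $2\varphi_2$, and it is also why a parity condition appears.
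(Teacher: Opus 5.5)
Your proof is correct, but it builds the $m$-sector differently from the paper. The paper takes B\'ezout coefficients $q_1,q_2$ with $m_1q_1+m_2q_2=1$. It extends the two given sequences (Corollaries \ref{cor-infty} and \ref{cor-fin}) so that a $|q_2|$-th $m_1$-sector and a $|q_1|$-th $m_2$-sector lie in $R^n$. These are the $m_2|q_2|$-th and $m_1|q_1|$-th $m$-sectors, which are adjacent because $\bigl|m_1|q_1|-m_2|q_2|\bigr|=1$, and Corollary \ref{cor-multi} then propagates this adjacent pair. You instead produce the first $m$-sector itself by one reflection of $k\varphi_1$ across $l\varphi_2$ (or vice versa). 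A single reflection $2\delta-\gamma$ can only double one of the two coefficients, so you need the parity condition and the shift $(x,y)\mapsto(x+m_2,y-m_1)$; the paper's adjacent-pair argument never needs this. In exchange, your explicit $r'=xm_1r_2+ym_2r_1$ settles a point the paper leaves implicit. Its two extended vectors are consecutive members of one $m$-equisector sequence only when $r'\equiv r_1\pmod{m_1}$ and $r'\equiv r_2\pmod{m_2}$, a tacit use of the Chinese remainder theorem, and your $r'$ is exactly that solution.

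Some smaller remarks:
\begin{enumerate}
\item[(1)] Step (c), which you flag as the main obstacle, is unnecessary. Step (b) already yields every $c_1\varphi_1+c_2\varphi_2$ with $c_1$ even (take $k=c_1/2$, $l=-c_2$) or $c_2$ even (take $l=c_2/2$, $k=-c_1$), which is exactly the set you need.
\item[(2)] Your parity shift does not even move the target modulo $2\pi$, since $m_1\varphi_1\equiv m_2\varphi_2\equiv\theta$.
\item[(3)] In the parallel case with $n\ge 3$, the two given sequences need not lie in a common plane, so ``any plane containing the given equisector vectors'' may not exist. The fix is easy: $\theta=0$ is trivial, and for $\theta=\pi$ the odd $m_i$ admits the collinear sequence $\bm{a},-\bm{a},\bm{a},\dots$, so you can work in the plane of the other sequence.
\end{enumerate}
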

\begin{pf}
It is obvious that (i) implies (ii). 
To show that (ii) implies (i), it suffices to consider the case when $r = 2.$ 
Suppose that $\angle (\bm{a},\bm{b})$ is both $m_1$-sectable and $m_2$-sectable over $R.$ 
Since $m_1$ and $m_2$ are coprime, there exist integers $q_1,$ $q_2 \neq 0$ such that $m_1q_1+m_2q_2 = 1.$ 
Then a $|q_2|$-th $m_1$-sector vector and a $|q_1|$-th $m_2$-sector vector of $\angle (\bm{a},\bm{b})$ over $\mathbb R$ can be taken in $R^n,$ which are an $m_2|q_2|$-th $m_1m_2$-sector vector and an $m_1|q_1|$-th $m_1m_2$-sector vector over $\mathbb R,$ respectively. 
This implies that (i) holds by Corollary \ref{cor-multi}, since $\big| m_1|q_1|-m_2|q_2|\big| = 1.$
\end{pf}
\section{Angle Multisections}\label{sec-multi}
In this section, we provide a proof and examples of Theorem \ref{thm-multi}.
\subsection{Orthonormal Basis for the Plane Spanned by Two Vectors}\label{subsec-lem}
In the following arguments, we use the following change of basis.
\begin{lem}\label{lem-1}
Let $\bm{a},$ $\bm{b} \in \mathbb R^n$ be linearly independent vectors, and let $p$ and $s$ be defined as in \eqref{eq-ps}. 
Then 
\begin{align} 
\bm{v}_1 = \frac{\bm{a}}{|\bm{a}|} \quad\text{and}\quad \bm{v}_2 = \frac{|\bm{a}|}{s}\bm{b}-\frac{p}{s|\bm{a}|}\bm{a} \label{eq-v1-v2} 
\end{align} 
satisfy 
\begin{align} 
&|\bm{v}_1| = |\bm{v}_2| = 1, \quad \bm{v}_1\perp\bm{v}_2, \label{eq-v12} \\ 
&\bm{a} =|\bm{a}|\bm{v}_1, \quad\text{and}\quad \bm{b} = \frac{p}{|\bm{a}|}\bm{v}_1+\frac{s}{|\bm{a}|}\bm{v}_2. \label{eq-ab} 
\end{align} 
\end{lem}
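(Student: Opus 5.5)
The plan is a direct computation with inner products, using only bilinearity of $\langle\cdot,\cdot\rangle$ and the definitions in \eqref{eq-ps}. First I note that $s$ is well defined and positive. By the Cauchy--Schwarz inequality, $|\bm{a}|^2|\bm{b}|^2-p^2 \geq 0$, and equality holds only if $\bm{a}$ and $\bm{b}$ are linearly dependent. Since they are linearly independent, $s>0$. Also $\bm{a}\neq\bm{0}$, so every denominator in \eqref{eq-v1-v2} is nonzero.

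Next I verify \eqref{eq-v12}. The identity $|\bm{v}_1|=1$ is immediate. For orthogonality, I compute
\[\langle\bm{v}_1,\bm{v}_2\rangle = \frac{1}{|\bm{a}|}\Bigl(\frac{|\bm{a}|}{s}p-\frac{p}{s|\bm{a}|}|\bm{a}|^2\Bigr) = 0.\]
For the norm, I expand
\[|\bm{v}_2|^2 = \frac{|\bm{a}|^2}{s^2}|\bm{b}|^2-2\frac{p}{s^2}p+\frac{p^2}{s^2|\bm{a}|^2}|\bm{a}|^2 = \frac{|\bm{a}|^2|\bm{b}|^2-p^2}{s^2} = 1.\]

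Finally I check \eqref{eq-ab}. The identity $\bm{a}=|\bm{a}|\bm{v}_1$ is just the definition of $\bm{v}_1$. For $\bm{b}$, I substitute \eqref{eq-v1-v2} into the right-hand side:
\[\frac{p}{|\bm{a}|}\cdot\frac{\bm{a}}{|\bm{a}|}+\frac{s}{|\bm{a}|}\Bigl(\frac{|\bm{a}|}{s}\bm{b}-\frac{p}{s|\bm{a}|}\bm{a}\Bigr) = \frac{p}{|\bm{a}|^2}\bm{a}+\bm{b}-\frac{p}{|\bm{a}|^2}\bm{a} = \bm{b}.\]

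There is no real obstacle here. The only point needing care is that $s>0$, which is exactly where the linear independence hypothesis enters, via the equality case of Cauchy--Schwarz.
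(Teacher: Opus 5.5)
Your proof is correct: the computations of $\langle\bm{v}_1,\bm{v}_2\rangle$, $|\bm{v}_2|^2$ and the reconstruction of $\bm{b}$ all check out, and the equality case of Cauchy--Schwarz properly gives $s>0$. The paper makes the same verification in one line by writing $p=|\bm{a}||\bm{b}|\cos\angle(\bm{a},\bm{b})$ and $s=|\bm{a}||\bm{b}|\sin\angle(\bm{a},\bm{b})$ (positivity of $s$ then coming implicitly from the angle lying in $(0,\pi)$), whereas you use only bilinearity, which is a self-contained, trigonometry-free version of the same Gram--Schmidt check.
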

\begin{pf}
These relations follow directly from the definition of the inner product, 
\[ p = |\bm{a}||\bm{b}|\cos\angle (\bm{a},\bm{b}),\] 
and the formula for the area of the parallelogram spanned by $\bm{a}$ and $\bm{b},$ 
\[ s = |\bm{a}||\bm{b}|\sin\angle (\bm{a},\bm{b}).\qedhere\]  
\end{pf}
\subsection{Multisections of Non-right Angles}\label{subsec-pf}
We also use the following two lemmas.
\begin{lem}\label{lem-2}
Let $\bm{a},$ $\bm{b} \in R^n$ be linearly independent vectors, and let $s$ be defined as in \eqref{eq-ps}. 
If $\bm{c} \in (R\bm{a}+R\bm{b})\setminus\{\bm{0}\}$ and $\bm{a}$ and $\bm{c}$ are nonorthogonal, then 
\begin{equation} 
\frac{1}{s}\tan\angle (\bm{a},\bm{c}) \in F. \label{eq-tan} 
\end{equation}
\end{lem}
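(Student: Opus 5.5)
Write $\bm{c} = x\bm{a}+y\bm{b}$ with $x,$ $y \in R,$ not both zero, and express everything in the orthonormal basis $\bm{v}_1,$ $\bm{v}_2$ of Lemma \ref{lem-1}. By \eqref{eq-ab},
\[\bm{c} = \left( x|\bm{a}|+\frac{yp}{|\bm{a}|}\right)\bm{v}_1+\frac{ys}{|\bm{a}|}\bm{v}_2 = \frac{1}{|\bm{a}|}\Big( (x|\bm{a}|^2+yp)\bm{v}_1+ys\,\bm{v}_2\Big).\]
Since $\bm{a} = |\bm{a}|\bm{v}_1,$ the angle $\angle (\bm{a},\bm{c})$ is the angle between $\bm{v}_1$ and the vector $\bm{c},$ viewed in the plane spanned by the orthonormal pair $\bm{v}_1,$ $\bm{v}_2.$

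The plan is to read off the cosine and sine directly. We have
\[\cos\angle (\bm{a},\bm{c}) = \frac{x|\bm{a}|^2+yp}{\sqrt{(x|\bm{a}|^2+yp)^2+y^2s^2}}\quad\text{and}\quad \sin\angle (\bm{a},\bm{c}) = \frac{|ys|}{\sqrt{(x|\bm{a}|^2+yp)^2+y^2s^2}},\]
the sine being nonnegative because $\angle (\bm{a},\bm{c}) \in [0,\pi ]$ and $s > 0$ by linear independence. Nonorthogonality of $\bm{a}$ and $\bm{c}$ says exactly that $\langle\bm{a},\bm{c}\rangle = x|\bm{a}|^2+yp \neq 0,$ so the tangent is defined and equals
\[\tan\angle (\bm{a},\bm{c}) = \frac{|y|s}{x|\bm{a}|^2+yp}.\]
Hence
\[\frac{1}{s}\tan\angle (\bm{a},\bm{c}) = \frac{|y|}{x|\bm{a}|^2+yp}.\]
Here $x,$ $y \in R,$ and $|\bm{a}|^2,$ $p \in R$ because $\bm{a},$ $\bm{b} \in R^n.$ Moreover $|y| = \pm y \in R.$ So this quotient lies in $F,$ which proves \eqref{eq-tan}.

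There is no real obstacle. The only points needing care are the following.
\begin{enumerate}
\item[(1)] The sign of the sine component: the basis coefficient $ys/|\bm{a}|$ may be negative, while the unsigned angle has nonnegative sine. This is why $|y|$ rather than $y$ appears, and it is harmless since $R$ is closed under negation.
\item[(2)] Cancelling the irrational quantity $s$: the sine is proportional to $s$, while the cosine numerator $x|\bm{a}|^2+yp$ lies in $R$. This is exactly why the factor $1/s$ appears in \eqref{eq-tan}.
\end{enumerate}
One could equivalently avoid the basis altogether. In that approach one computes $\langle\bm{a},\bm{c}\rangle = x|\bm{a}|^2+yp$ and obtains $|\bm{a}|^2|\bm{c}|^2-\langle\bm{a},\bm{c}\rangle ^2 = y^2s^2$ by bilinearity. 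The formula $\tan\angle (\bm{a},\bm{c}) = \sqrt{|\bm{a}|^2|\bm{c}|^2-\langle\bm{a},\bm{c}\rangle ^2}/\langle\bm{a},\bm{c}\rangle$ then gives the same expression.
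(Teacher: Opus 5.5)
Your proof is correct and matches the paper's argument: write $\bm{c}$ in the orthonormal basis $\bm{v}_1,\bm{v}_2$ of Lemma~\ref{lem-1}, read off the cosine and sine components, and divide so that $s$ cancels. Your $|y|$ is slightly more careful than the paper, which writes $\bm{c} = |\bm{c}|(\cos\varphi)\bm{v}_1 + |\bm{c}|(\sin\varphi)\bm{v}_2$ for the unsigned angle $\varphi$ and so tacitly assumes the $\bm{v}_2$-coefficient is nonnegative; the conclusion is unaffected either way because $R$ is closed under negation.
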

\begin{pf}
Let $\bm{c} \in (R\bm{a}+R\bm{b})\setminus\{\bm{0}\}$ and $\varphi = \angle (\bm{a},\bm{c}).$ 
Then there exist scalars $\lambda, \mu \in R,$ not both zero, such that $\bm{c} = \lambda\bm{a}+\mu\bm{b}.$ 
We use the orthonormal basis $\{\bm{v}_1, \bm{v}_2\}$ of $\mathbb R\bm{a}+\mathbb R\bm{b}$ defined in \eqref{eq-v1-v2}.
On one hand, by \eqref{eq-v12}, $\bm{c}$ can be expressed as 
\[\bm{c} = |\bm{c}|(\cos\varphi )\bm{v}_1+|\bm{c}|(\sin\varphi )\bm{v}_2.\] 
On the other hand, substituting \eqref{eq-ab} into the relation $\bm{c} = \lambda \bm{a} + \mu \bm{b}$, we obtain 
\[\bm{c} = \lambda |\bm{a}|\bm{v}_1+\mu\left(\frac{p}{|\bm{a}|}\bm{v}_1+\frac{s}{|\bm{a}|}\bm{v}_2\right) = \frac{\lambda |\bm{a}|^2+\mu p}{|\bm{a}|}\bm{v}_1+\frac{\mu s}{|\bm{a}|}\bm{v}_2.\] 
Since $\bm{v}_1$ and $\bm{v}_2$ are linearly independent over $\mathbb R,$ we can equate the coefficients to obtain 
\[ |\bm{c}|\cos\varphi = \frac{\lambda |\bm{a}|^2+\mu p}{|\bm{a}|} \quad\text{and}\quad |\bm{c}|\sin\varphi = \frac{\mu s}{|\bm{a}|}.\] 
Dividing the second equation by the first yields 
\[\tan\varphi = \frac{\mu s}{\lambda |\bm{a}|^2+\mu p},\] 
which implies \eqref{eq-tan} since $\mu,$ $\lambda |\bm{a}|^2+\mu p \in R.$
\end{pf}
\begin{lem}\label{lem-3}
Let $\bm{a},$ $\bm{b} \in \mathbb R^n$ be linearly independent and nonorthogonal vectors, and let $s$ be defined as in \eqref{eq-ps}. 
Then \eqref{eq-deg-m} has $m$ distinct nonzero real roots. 
Every root $t$ of \eqref{eq-deg-m} corresponds uniquely to a real number $\varphi \in (-\pi/2, \pi/2)$ satisfying $\angle (\bm{a},\bm{b}) \equiv m\varphi \pmod\pi$ via the relation 
\[ t = \frac{s}{\tan\varphi}.\]
\end{lem}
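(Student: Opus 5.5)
The plan is to recognize \eqref{eq-deg-m} as the real and imaginary parts of a complex power. Write $\theta = \angle(\bm{a},\bm{b})$. By Lemma \ref{lem-1}, $p = |\bm{a}||\bm{b}|\cos\theta$ and $s = |\bm{a}||\bm{b}|\sin\theta$, so $p + is = |\bm{a}||\bm{b}|e^{i\theta}$. Here $s>0$ by linear independence, and $p \neq 0$ by nonorthogonality, so $\theta \in (0,\pi)\setminus\{\pi/2\}$. By the binomial theorem,
\[(t+is)^m = \sum_{i}(-s^2)^i\binom{m}{2i}t^{m-2i} + is\sum_{i}(-s^2)^i\binom{m}{2i+1}t^{m-2i-1}.\]
Call these two sums $A(t)$ and $B(t)$, so that $(t+is)^m = A(t) + isB(t)$. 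Then \eqref{eq-deg-m} is $A(t) - pB(t)$. The condition $A(t) = pB(t)$ says that $s(A(t) - pB(t)) = 0$, i.e. $sA(t) - pB(t)s = 0$, which is $\operatorname{Im}\bigl[(t+is)^m(p - is)\bigr]\cdot(-1)$ up to sign. Let me compute it. We have
\[\operatorname{Im}\bigl[(A+isB)(p-is)\bigr] = psB - sA = -s(A-pB).\]
Hence for real $t$, $t$ is a root if and only if $(t+is)^m\,\overline{(p+is)}$ is real.

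\textbf{Step 1 (roots to angles).} For real $t$, write $t + is = \rho e^{i\varphi'}$ with $\varphi' \in (0,\pi)$, since $s>0$. The realness condition becomes $m\varphi' \equiv \theta \pmod{\pi}$. Set $\varphi = \pi/2 - \varphi'$... but it is more natural to take $\varphi \in (-\pi/2,\pi/2)$ with $t = s/\tan\varphi$. Note that $\cot\varphi' = t/s$, so $\varphi'$ itself gives $t = s\cot\varphi'$. Define $\varphi \equiv \varphi' \pmod{\pi}$, reduced into $(-\pi/2,\pi/2]$. Since $\tan$ has period $\pi$, $s/\tan\varphi = s\cot\varphi' = t$. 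Also $m\varphi \equiv m\varphi' \equiv \theta \pmod\pi$.

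\textbf{Step 2 (converse and nonzero roots).} Conversely, take any $\varphi \in (-\pi/2,\pi/2)$ with $m\varphi \equiv \theta \pmod\pi$. Then $\varphi \neq 0$, since $\theta \not\equiv 0$. So $t = s/\tan\varphi$ is a well-defined real number. The corresponding $\varphi' \in (0,\pi)$ satisfies the realness condition, so $t$ is a root.

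The value $\varphi = \pi/2$ is excluded: it would give $t = 0$ and require $m\pi/2 \equiv \theta \pmod\pi$, i.e. $\theta \equiv 0$ or $\pi/2$, which is impossible. So every root is nonzero, and every root lands at some $\varphi \in (-\pi/2,\pi/2)$, $\varphi \neq 0$.

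\textbf{Step 3 (counting).} The solutions of $m\varphi \equiv \theta \pmod\pi$ are $\varphi = (\theta + k\pi)/m$. In an interval of length $\pi$ there are exactly $m$ of them, and none lies at the endpoints $\pm\pi/2$, by the exclusion above. On $(-\pi/2,0)$ and on $(0,\pi/2)$ the map $\varphi \mapsto s/\tan\varphi$ is injective, and the two pieces have images of opposite sign. So it is injective on $(-\pi/2,\pi/2)\setminus\{0\}$, and these $m$ values of $\varphi$ give $m$ distinct real roots.

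The polynomial has degree $m$ with leading coefficient $1$, so these are all of its roots. This also confirms that the correspondence between roots and angles is unique.

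The main obstacle is the bookkeeping of the signs and congruences in Step 1: checking that the imaginary-part identity exactly matches \eqref{eq-deg-m}, including the factor $p$ rather than $p/(|\bm{a}||\bm{b}|)$. The scaling by $|\bm{a}||\bm{b}|$ is harmless, because only the argument of $p+is$ matters.
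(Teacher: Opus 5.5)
Your proof is correct, and it takes a somewhat different route from the paper's.

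The paper argues only from angles to roots. For each $\varphi\in(-\pi/2,\pi/2)$ with $m\varphi\equiv\theta\pmod\pi$, it puts $u=\tan\varphi$ into the multiple-angle formula for $\tan m\varphi$, equates this with $\tan\theta=s/p$, clears denominators and substitutes $u=s/t$. Counting the $m$ distinct values of $\tan\varphi$ then gives $m$ distinct nonzero real roots. That these are all the roots is left implicit in the degree. So is the uniqueness of the correspondence, which is the direction the proof of (M3)$\Rightarrow$(M1) actually uses.

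Your identity $\operatorname{Im}\bigl[(t+is)^m\,\overline{(p+is)}\bigr]=-s\,f(t)$, with $f$ the polynomial \eqref{eq-deg-m}, is the de Moivre computation hidden inside the tangent formula. Because it involves no division, it characterizes the real roots exactly, so both directions of the correspondence come out at once. You never divide by $p$ or by $\cos m\varphi$, and you state explicitly the monic-degree-$m$ argument that excludes non-real roots. The paper's version is shorter and stays within real trigonometry. Yours is more self-contained and shows why the unnormalized $p$ is harmless: only $\arg(p+is)=\theta$ matters.

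One presentational point: in Step 1 you assert $s/\tan\varphi=t$ before ruling out $\varphi=\pi/2$ (that is, $t=0$), which you only do in Step 2. That exclusion should come first, although the logic is sound.
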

\begin{pf}
Let $\theta = \angle (\bm{a},\bm{b}).$ 
Suppose that $\varphi \in (-\pi/2, \pi/2)$ satisfies $\theta \equiv m\varphi \pmod\pi.$ 
Since $\bm{a}$ and $\bm{b}$ are linearly independent and nonorthogonal, $\theta \neq 0,$ $\pi /2,$ $\pi,$ which implies $\varphi \neq 0$ and $\tan \varphi$ is well-defined.
Let $u = \tan\varphi,$ where $u \neq 0.$ 
Substituting 
\[\tan\theta = \frac{s}{p}\] 
and 
\[\tan m\varphi = \frac{\displaystyle\sum_{i = 0}^{\lfloor (m-1)/2\rfloor}(-1)^i\binom{m}{2i+1}u^{2i+1}}{\displaystyle\sum_{i = 0}^{\lfloor m/2\rfloor}(-1)^i\binom{m}{2i}u^{2i}}\] 
into $\tan\theta = \tan m\varphi,$ we obtain 
\begin{equation} 
s\sum_{i = 0}^{\lfloor m/2\rfloor}(-1)^i\binom{m}{2i}u^{2i} = p\sum_{i = 0}^{\lfloor (m-1)/2\rfloor}(-1)^i\binom{m}{2i+1}u^{2i+1}. \label{eq-deg-m-prime} 
\end{equation} 
Let $t = s/u.$ 
Substituting $u = s/t$ into \eqref{eq-deg-m-prime}, we obtain 
\[ s\sum_{i = 0}^{\lfloor m/2\rfloor}(-s^2)^i\binom{m}{2i}t^{-2i} = ps\sum_{i = 0}^{\lfloor (m-1)/2\rfloor}(-s^2)^i\binom{m}{2i+1}t^{-2i-1}.\] 
Multiplying both sides by $t^m/s,$ we find that $t$ is a root of \eqref{eq-deg-m}. 
Since there exist $m$ distinct values of $\tan \varphi$ corresponding to the $m$ solutions of $\theta \equiv m\varphi \pmod\pi$ within $(-\pi/2,\pi/2),$ we conclude that \eqref{eq-deg-m} has $m$ distinct nonzero real roots.
\end{pf}
We are now ready to prove Theorem \ref{thm-multi}.
\begin{pf}[Proof of Theorem \ref{thm-multi}]
By Lemma \ref{lem-2}, (M1) implies (M2).\par
Furthermore, (M2) implies (M3) as a direct consequence of Lemma \ref{lem-3}.\par
It remains to show that (M3) implies (M1) and the last assertion. 
Let $t \in F$ be a root of \eqref{eq-deg-m}, and let $\varphi = \arctan (s/t).$ 
We define $\bm{v}_1$ and $\bm{v}_2$ as in \eqref{eq-v1-v2}. 
Then $s/\tan\varphi = t,$ $\angle (\bm{a},\bm{b}) \equiv m\varphi \pmod\pi$ by Lemma \ref{lem-3}, and therefore $\angle (\bm{a},\bm{b})$ has a first $m$-sector vector $\bm{c}$ over $\mathbb R$ given by 
\begin{align*} 
\bm{c} &= \frac{s|\bm{a}|}{\sin\varphi}((\cos\varphi )\bm{v}_1+(\sin\varphi )\bm{v}_2) \\ 
&= \frac{s}{\tan\varphi}\bm{a}+s|\bm{a}|\left(\frac{|\bm{a}|}{s}\bm{b}-\frac{p}{s|\bm{a}|}\bm{a}\right) \\ 
&= \left(\frac{s}{\tan\varphi}-p\right) \bm{a}+|\bm{a}|^2\bm{b} \\ 
&= (t-p)\bm{a}+|\bm{a}|^2\bm{b}, 
\end{align*} 
which lies in $F^n.$ 
Since $t \in F,$ there exists a scalar $\lambda \in R\setminus\{ 0\}$ such that $\lambda t \in R,$ which implies $\lambda\bm{c} \in R^n.$ 
Thus, by Corollary \ref{cor-multi}, $\angle (\bm{a},\bm{b})$ is $m$-sectable over $R.$
\end{pf}
\begin{eg}
Since the discriminant of the quadratic equation $t^2-2pt-s^2 = 0$ is given by 
\[ (-2p)^2-4\cdot 1\cdot (-s^2) = 4(p^2+s^2) = 4|\bm{a}|^2|\bm{b}|^2,\] 
the quadratic polynomial \eqref{eq-deg-2} has a root in $F$ if and only if $|\bm{a}|^2|\bm{b}|^2 \in F^{\times 2},$ which is equivalent to (B4) in Theorem \ref{thm-bisec}. 
Consequently, $\angle (\bm{a},\bm{b})$ is bisectable over $R$ if and only if (B4) holds.
\end{eg}
\begin{eg}
Let $\bm{a} = (1,1,1),$ $\bm{b} = (-11,6,23) \in \mathbb Z^3.$ 
Then $\angle (\bm{a},\bm{b})$ is trisectable over $\mathbb Z,$ since the cubic polynomial $t^3-54t^2-5202t+31212$ corresponding to \eqref{eq-deg-3} has an integral root $t = 102.$ 
A first trisector vector of $\angle (\bm{a},\bm{b})$ over $\mathbb Z$ is given by 
\[ (t-\langle\bm{a},\bm{b}\rangle )\bm{a}+|\bm{a}|^2\bm{b} = (102-18)(1,1,1)+3(-11,6,23) = 51(1,2,3).\]
\end{eg}
\subsection{Multisections of Right Angles}\label{subsec-pf}
In this subsection, we consider the $m$-sectability of right angles.
\begin{prop}
The following conditions are equivalent.
\begin{enumerate}
\item[\textup{(i)}]
For any perpendicular vectors $\bm{a},$ $\bm{b} \in R^n\setminus\{\bm{0}\},$ $\angle (\bm{a},\bm{b})$ is $m$-sectable over $R.$ 
\item[\textup{(ii)}]
We have 
\[\tan\frac{\pi}{2m} \in F.\]
\end{enumerate}
\end{prop}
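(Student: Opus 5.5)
The plan is to reduce both directions to the orthonormal frame of Lemma \ref{lem-1}, with $p=0$ and $s=|\bm{a}||\bm{b}|$. Let $\bm{a},\bm{b}$ be perpendicular and let $\varphi\in(0,\pi)$ be an angle with $m\varphi\equiv\pi/2\pmod\pi$. A real $m$-equisector sequence of $\angle(\bm{a},\bm{b})$ is then given, up to positive scalars, by
\[
\bm{c}_j=(\cos j\varphi)\,|\bm{a}|\bm{v}_1+(\sin j\varphi)\,|\bm{a}|\bm{v}_2,\qquad |\bm{a}|\bm{v}_2=\frac{|\bm{a}|}{|\bm{b}|}\bm{b}.
\]
So $\bm{c}_1$ is parallel to $\bm{a}+(\tan\varphi)\frac{|\bm{a}|}{|\bm{b}|}\bm{b}$. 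The proof of Lemma \ref{lem-2} shows the converse: any $\bm{c}=\lambda\bm{a}+\mu\bm{b}$ with $\lambda,\mu\in R$ has $\tan\angle(\bm{a},\bm{c})=\mu s/(\lambda|\bm{a}|^2)$.

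\emph{(i)$\Rightarrow$(ii).} Apply (i) to $\bm{a}=\bm{e}_1$ and $\bm{b}=\bm{e}_2$, so $s=1$. The equisectors lie in $R\bm{e}_1+R\bm{e}_2$, since they are coplanar with $\bm{e}_1,\bm{e}_2$ and have entries in $R$. By Lemma \ref{lem-2} (or directly, since $\bm{c}_j=(x,y,0,\dots)$), $\tan j\varphi\in F\cup\{\infty\}$ for every $j$, where $\varphi=(\pi/2+2r\pi)/m$.

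The set $G=\{x \bmod \pi : \tan x\in F\cup\{\infty\}\}$ is a subgroup of $\mathbb R/\pi\mathbb Z$, by the subtraction formula $\tan(x-y)=(\tan x-\tan y)/(1+\tan x\tan y)$. $G$ contains $\varphi=(4r+1)\pi/(2m)$ and $\pi/2=m\pi/(2m)$, hence it contains $g\pi/(2m)$ with $g=\gcd(4r+1,m)$. When $g=1$ this gives $\tan(\pi/(2m))\in F$.

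The case $g>1$ is the obstacle I expect. I would first try to show that one may always re-choose $r$, using the extension trick of Corollary \ref{cor-multi} together with a decomposition as in Proposition \ref{prop-decomp}. Failing that, I would argue that the definition with a non-primitive $r$ is degenerate. In that case an intermediate $\bm{c}_j$ with $j=m/g$ is perpendicular to $\bm{a}$, and one can reduce to a smaller $m$ by induction.

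\emph{(ii)$\Rightarrow$(i).} Take $\varphi=\pi/(2m)$, so that $\tan\varphi\in F$. If $|\bm{a}|/|\bm{b}|\in F$, then $\bm{c}_1$ lies in $F^n$; clearing denominators by some $\lambda\in R\setminus\{0\}$ and applying Corollary \ref{cor-multi} to the pair $(\bm{c}_0,\bm{c}_1)=(\bm{a},\lambda\bm{c}_1)$ gives $m$-sectability over $R$.

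The delicate point is the factor $|\bm{a}|/|\bm{b}|$. For $n=2$ it is automatic, because $\bm{b}$ is then a multiple of $(-a_2,a_1)$ by an element of $F$. For $n\ge 3$ it can fail: for $\bm{a}=\bm{e}_1$ and $\bm{b}=\bm{e}_2+\bm{e}_3$, Theorem \ref{thm-bisec} (B4) shows that the angle is not bisectable over $\mathbb Z$. So I would check whether the intended statement quantifies over pairs with $|\bm{a}|^2\equiv|\bm{b}|^2\pmod{F^{\times2}}$, or over the plane case. The argument above proves the statement under that hypothesis, and this hypothesis is exactly where the main care is needed.
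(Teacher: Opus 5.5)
\textbf{Direction (ii)$\Rightarrow$(i).} Your objection here is correct. The problem lies in the statement and in the paper's proof, not in your argument. The paper proves this direction by asserting that $\lambda(\bm{a}+\tan\frac{\pi}{2m}\,\bm{b})$ is a first $m$-sector vector for every perpendicular pair. But the angle between $\bm{a}$ and $\bm{a}+t\bm{b}$ has tangent $t|\bm{b}|/|\bm{a}|$, so this holds only when $|\bm{a}|=|\bm{b}|$. That is the case of $\bm{e}_1,\bm{e}_2$ used in the other direction, but not in general.

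Your example $\bm{e}_1$, $\bm{e}_2+\bm{e}_3\in\mathbb Z^3$ with $m=2$ is a genuine counterexample: a bisector $x\bm{a}+y\bm{b}$ with $x,y\in\mathbb Z$ would need $x^2=2y^2$. The same slip underlies Example (1) after the proposition. It also underlies the $e=1$ step of Theorem \ref{thm-pw2}, where ``$|\bm{a}||\bm{b}|\in F^\times$ iff $\cos\theta\in F$'' fails when $\langle\bm{a},\bm{b}\rangle=0$.

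Your repaired statement adds the hypothesis $|\bm{a}|^2\equiv|\bm{b}|^2\pmod{F^{\times 2}}$, which is automatic for $n=2$. You prove it correctly via Corollary \ref{cor-multi}. Its (i)$\Rightarrow$(ii) direction is unaffected, since $\bm{e}_1,\bm{e}_2$ satisfy the hypothesis.

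\textbf{Direction (i)$\Rightarrow$(ii).} The part of your proposal that does not work is the case $g>1$. Suppose $r$ could genuinely be nonzero, with angles read as oriented angles mod $2\pi$.
\begin{itemize}
\item For $m=3$, $r=2$, the sequence $\bm{e}_1,-\bm{e}_2,-\bm{e}_1,\bm{e}_2$ satisfies the defining relations.
\item For $m=6$, $r=2$, so do the integer vectors at angles $3j\pi/4$.
\end{itemize}
Then every right angle would be trisectable and $6$-sectable over $\mathbb Z$, and (i)$\Rightarrow$(ii) would be false. No induction through $\bm{c}_{m/g}\perp\bm{a}$ can rescue this. All the $\bm{c}_j$ sit at multiples of $\frac{g\pi}{2m}$, so nothing about $\tan\frac{\pi}{2m}$ can be extracted from them, however you use your subgroup $G$.

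The correct resolution is that the definition, read as written with angle measures in $[0,\pi]$, forces $r=0$: the clause $j=m$ reads $\angle(\bm{a},\bm{b})=\angle(\bm{a},\bm{b})+2r\pi$. The paper's proof tacitly uses this, placing $\bm{c}_1$ at angle $\frac{\pi}{2m}$ from $\bm{e}_1$. With $r=0$ you have $g=1$ and $G$ is superfluous. You can then read off $\tan\frac{\pi}{2m}=|y|/x\in F$ directly from $\bm{c}_1=(x,y,0,\dots,0)\in R^n$, which is exactly the paper's argument.
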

\begin{pf}
Suppose that (i) holds. 
Let $\bm{a} = (1,0,0,\dots,0)$ and $\bm{b} = (0,1,0,\dots,0).$ 
Then a first $m$-sector vector $\bm{c}$ of $\angle (\bm{a},\bm{b})$ over $R$ is given by 
\begin{equation} 
\bm{c} = \lambda\left(\bm{a}+\tan\frac{\pi}{2m}\bm{b}\right) \quad\text{or}\quad \bm{c} = \lambda\left(\cot\frac{\pi}{2m}\bm{a}+\bm{b}\right) \label{eq-right} 
\end{equation} 
for some $\lambda \in R\setminus\{ 0\}.$ 
Since $\bm{c} \in R^n,$ and 
\[\bm{c} = \left(\lambda,\lambda\tan\frac{\pi}{2m},0,\dots,0\right) \quad\text{or}\quad \bm{c} = \left(\lambda\cot\frac{\pi}{2m},\lambda,0,\dots,0\right),\] 
we conclude that (ii) holds.\par
Conversely, suppose that (ii) holds. 
Let $\bm{a},$ $\bm{b} \in R^n\setminus\{\bm{0}\}$ be perpendicular vectors. 
Then a first $m$-sector vector $\bm{c}$ of $\angle (\bm{a},\bm{b})$ over $\mathbb R$ can be taken in $R^n$ by letting \eqref{eq-right} for some $\lambda \in R\setminus\{ 0\}.$ 
This implies (i) by Corollary \ref{cor-multi}.\qedhere
\end{pf}
\begin{eg}
\begin{enumerate}
\item[(1)]
For any perpendicular vectors $\bm{a},$ $\bm{b} \in R^n\setminus\{\bm{0}\},$ $\angle (\bm{a},\bm{b})$ is bisectable over $R,$ since $\tan (\pi /4) = 1 \in F.$
\item[(2)]
If $\sqrt 3 \in F,$ then, for any perpendicular vectors $\bm{a},$ $\bm{b} \in R^n\setminus\{\bm{0}\},$ $\angle (\bm{a},\bm{b})$ is trisectable and sexisectable over $R,$ since $\tan (\pi /6) = 1/\sqrt 3$ and Proposition \ref{prop-decomp} holds. 
The converse is also true.
\end{enumerate}
\end{eg}
\section{Angle $2^e$-sections}\label{sec-pw2}
In this section, we provide a proof and examples of Theorem \ref{thm-pw2}. 
\begin{pf}[Proof of Theorem \ref{thm-pw2}]
Let $\theta = \angle (\bm{a},\bm{b}).$\par
For the case when $e = 1,$ note that (B1) in Theorem \ref{thm-bisec} is equivalent to $\cos\theta \in F.$ 
Indeed, by Theorem \ref{thm-bisec}, (B1) is equivalent to (B4), which can be restated as $|\bm{a}||\bm{b}| \in F^\times,$ or equivalently, $\cos\theta \in F$ since $\langle\bm{a},\bm{b}\rangle = |\bm{a}||\bm{b}|\cos\theta$ and $\langle\bm{a},\bm{b}\rangle \in R.$\par
We now consider the general case when $e \geq 1.$ 
Suppose that (C1) holds. 
In a sequence $(\bm{c}_0,\bm{c}_1,\dots,\bm{c}_m)$ of $2^e$-equisector vectors of $\angle (\bm{a},\bm{b})$ over $R,$ the vector $\bm{c}_1$ is a bisector vector of $\angle (\bm{a},\bm{c}_2)$ over $R,$ which implies (C2) by the above argument.\par
Next, (C2) implies (C3), since 
\[\cos ^2\frac{\angle (\bm{a},\bm{b})}{2^i} = \frac{1}{2}\left( 1+\cos\frac{\angle (\bm{a},\bm{b})}{2^{i-1}}\right) \quad (i \in \{ 1,\dots,e-1\})\] 
by the double-angle formula.\par
Finally, suppose that (C3) holds. 
First, $\cos\theta \in F$ implies that a bisector vector $\bm{c}_{2^{e-1}}$ of $\angle (\bm{a},\bm{b})$ over $\mathbb R$ can be taken in $R^n.$ 
Second, $\cos (\theta /2) \in F$ implies that a bisector vector $\bm{c}_{2^{e-2}}$ of $\angle (\bm{a},\bm{c}_{2^{e-1}})$ over $\mathbb R$ can be taken in $R^n,$ which serves as a first quadrisector vector of $\angle (\bm{a},\bm{b}).$ 
By repeating this process $e$ times, we find that a first $2^e$-sector vector $\bm{c}_1$ of $\angle (\bm{a},\bm{b})$ over $\mathbb R$ can be taken in $R^n.$ 
This implies (C1) by Corollary \ref{cor-multi}.
\end{pf}
\begin{eg}
Let $\bm{a} = (1,1,1),$ $\bm{b} = (-59,1,61) \in \mathbb Z^3.$ 
Then $\angle (\bm{a},\bm{b})$ is quadrisectable over $\mathbb Z,$ since 
\[\cos\angle (\bm{a},\bm{b}) = \frac{\langle\bm{a},\bm{b}\rangle}{|\bm{a}||\bm{b}|} = \frac{3}{\sqrt 3\cdot 49\sqrt 3} = \frac{1}{49}\] 
and 
\[\cos\frac{\angle (\bm{a},\bm{b})}{2} = \sqrt{\frac{1+\cos\angle (\bm{a},\bm{b})}{2}} = \sqrt{\frac{1}{2}\left(1+\frac{1}{49}\right)} = \sqrt{\frac{25}{49}} = \frac{5}{7} \in \mathbb Q.\]
\end{eg}
\begin{eg}
Theorem \ref{thm-pw2} remains valid for the case when $\angle (\bm{a},\bm{b}) = \pi /2.$ 
For example, if $\bm{a} = (a_1,a_2),$ $\bm{b} = (-a_2,a_1) \in R^2,$ then $\cos\angle (\bm{a},\bm{b}) = 0 \in F$ and $\angle (\bm{a},\bm{b})$ has a bisector vector $(a_1-a_2,a_1+a_2) \in R^2,$ which is consistent with the fact that $|\bm{a}|^2 = |\bm{b}|^2$ implies (B4).
\end{eg}

\end{document}